\documentclass[11pt, a4paper]{amsart}

\usepackage{graphicx,amsmath,amssymb}
\usepackage{amsthm, mathrsfs}
\usepackage{enumerate}
\usepackage{enumitem}
\usepackage[disable]{todonotes}
\usepackage[all]{xy,xypic}
\usepackage{url}

\newtheorem{Theorem}{Theorem}
\newtheorem{Lemma}[Theorem]{Lemma}
\newtheorem{Fact}[Theorem]{Fact}
\newtheorem{Proposition}[Theorem]{Proposition}

\newtheorem{Remark}[Theorem]{Remark}
\theoremstyle{definition}
\newtheorem{Definition}[Theorem]{Definition}

\newtheorem{Example}[Theorem]{Example}

\theoremstyle{remark}



\newcommand{\abs}[1]{\left\vert#1\right\vert}
\newcommand{\satisfies}{\models}
\renewcommand{\bar}{\overline}
\renewcommand{\implies}{\rightarrow}
\renewcommand{\phi}{\varphi}

\newcommand{\ideal}{\triangleleft}
\newcommand{\clsubset}{\subseteq_{cl}}
\newcommand{\opsubset}{\subseteq_{op}}
\newcommand{\pr}{\operatorname{pr}}

\newcommand{\Char}[1]{\operatorname{Char}(#1)}

\newcommand{\Dom}[1]{\operatorname{Dom}(#1)}
\newcommand{\Ker}[1]{\operatorname{Ker}(#1)}

\newcommand{\Aut}[1]{\operatorname{Aut}(#1)}


\title{Specialisations and Algebraically Closed Fields}
\author{U\u gur Efem}
\subjclass{Primary: 03C10, 03C35, 03C52,  03C60, 13A18 ; Secondary: 12J10, 12L12, 14A99}
 \keywords{Model Theory, Zariski Structures, Zarisk Geometries, Specialisations, Algebraically 
 Closed Fields, Algebraic Geometry, Varieties, Valuations, Valued Fields.}
\address{University of Oxford, Mathematical Institute, Oxford, UK}
\email{efem@maths.ox.ac.uk}

\begin{document}
\begin{abstract}
We study the theory specialisations in algebraic geometry from a model theoretic 
viewpoint. In particular we investigate universality and maximality of specialisations 
in algebraic geometry.
\end{abstract}

\maketitle

\section{Introduction}
\label{Sec:1}

Specialisations in algebraic geometry were introduced early during 
the development of the subject (e.g. \cite{Weil}), and they are well 
understood. In abstract algebraic geometry over an algebraically 
closed field of an arbitrary characteristic, specialisations replace the 
topological tools coming from the analytic structure of the field $\mathbb C$. 
In other words, specialisations are the algebraic analogue of taking 
limits, or evaluating a continuous function at a point. 

In this paper we work in the setting of Zariski structures, a model theoretic 
generalisation of algebraic geometry (and of algebraically closed fields in 
particular) introduced by Hrushovski and Zilber~\cite{HZ}. Loosely speaking, 
a Zariski structure is a first order structure $(M_0,\tau)$ where $\tau$ is a 
family of topologies consisting of a topology for each $M_0^n$ which satisfies 
certain topological axioms arising from the Zariski topology of an algebraic variety. 
From a model theoretic perspective, elements of $\tau$ are the basic relations 
of the first order structure $(M_0,\tau)$. Further, we assume a good notion of 
dimension for definable sets exists. 

In~\cite{HZ} Hrushovksi and Zilber have shown that for one dimensional Zariski 
geometries, specialisations play an important role. It is also well known that 
for a Zariski structure $(M_0,\tau)$, any universal specialisation of $M_0$ 
can replace the collection of topologies $\tau$. In a preprint~\cite{OZ}, Onshuus 
and Zilber initiated a study of universal specialisations of Zariski structures. 
Where, they do not restrict to one dimensional case. This paper is motivated 
by their preprint, but aims to start a different route to study specialisations, 
and especially universal specialisations.

In this paper, we study the specialisations of algebraically closed fields, 
and varieties definable in them. For fields, the theory of specialisations 
is given by valuations. We show that the theory of specialisations of Zariski 
geometries arising from algebraic geometry is also essentially a part of 
a theory of algebraically closed valued fields.

Valuation theory is well studied both algebraically and model theoretically. 
In comparison, a general theory of specialisations of Zariski structures is 
not as deeply studied. In this paper we take a step forward from valuations 
of fields and as mentioned study specialisations of algebraic varieties. As it 
turns out, this is an important first step in the study of general theory of 
specialisations. 

In Section~\ref{Sec:prelim} we introduce Zariski structures and specialisations 
in detail, in the model theoretic context; and recall some basic constructions 
and facts.

Section~\ref{sct:acvf} considers specialisations of algebraically closed fields. 
We show that they are essentially residue maps of valuations (Theorem~\ref{thm:ext-res}). 
In this section we also give a first order theory for specialisations of algebraically 
closed valued fields, whose saturated models have only universal specialisations 
(Lemma~\ref{lem:saturated-universal}). The theory in question is a theory of 
algebraically closed valued fields, and it is very close other theories of acvf 
present in the model theory literature such as~\cite{Delon2012, HHM, HHM2, 
HK, Leloup, Robinson}. We also prove that our theory admits quantifier elimination, 
and is complete (Lemma~\ref{lem:QE-complete}). Here we should note that 
the proof for quantifier elimination is very to close to proofs of quantifier 
elimination results in~\cite[Lemma 6.3]{HK} and in~\cite{Leloup} for similar 
theories of algebraically closed valued fields. As a result of completeness, 
any model of our theory is elementarily equivalent to a model where the 
specialisation is universal (Theorem~\ref{thm:res-el-equiv}). 

In Section~\ref{sct:varieties}, as the natural next step, we consider specialisations 
of varieties definable in algebraically closed fields. Here we explain their 
relation to the specialisations of the algebraically closed fields they are 
definable in. We show that the universality of a specialisation of an affine 
variety is determined by the universality a corresponding specialisation of 
the field it is defined over (Proposition~\ref{prop:affine-univ-1}). We also 
give a characterisation of specialisations of varieties in term of the specialisations 
of the fields they are defined over (Propositions~\ref{prop:characterization-spcl-affine-varieties} 
and~\ref{prop:characterization-spcl-qp-varieties}).

As we have already mentioned specialisations of the algebraically 
closed field in question are residue maps of valuations, this can be seen 
as and extension of the valuations to varieties. Through this perspective, 
one can also say that specialisations of algebraic varieties are essentially 
a part of the theory of algebraically closed valued fields.

\section{Preliminaries}\label{Sec:prelim}
\subsection{Zariski Structures}
Let $\mathcal L$ be a first order language, and $M$ be an $\mathcal L$-structure. 
The closed subsets of $M^n$ for all $n\in \mathbb N$ are defined as 
follows: The subsets defined by primitive relations of $\mathcal L$ are 
(definably) \emph{closed}, and moreover the sets which are given by 
positive quantifier free $\mathcal L$-formulas are \emph{closed}.
\newline The topological axioms that distinguish the closed  sets can be 
given as:
\begin{enumerate}
\item Finite intersection of closed sets is closed;
\item Finite unions of closed sets are closed;
\item $M$ is closed;
\item The graph of equality is closed;
\item Any singleton in $M$ is closed;
\item Cartesian products of closed sets are closed;
\item The image of a closed set under a permutation of coordinates is closed;
\item For $a\in M^k$ and closed $S\subseteq M^k+l$, the set $S(a,M^l)$ 
is closed.
\end{enumerate}

A \emph{constructible sets} is defined to be a boolean combination of 
closed sets. In other words they are nothing but quantifier free 
$\mathcal L$-formulas. A set $S$ is said to be \emph{irreducible} if there 
are no proper relatively closed subsets $S_1,S_2\subset S$ such that $S = 
S_1 \cup S_2$. A topological structure is called \emph{Noetherian} if it 
satisfies the descending chain condition (DCC) for closed sets. 

A topological structure is said to be semi-proper if it satisfies the following 
condition:
\begin{description}
\item[(SP)]\label{semi-prop} Semi Properness: For a closed irreducible 
$S\subset M^n$ and a projection $\pr:M^n\to M^m$, there is a proper 
closed subset $F\subset \bar{\pr(S)}$ such that $\bar{\pr(S)}\setminus 
F \subset \pr(S)$  
\end{description}

Remark that by Noetherianity any closed set $S$ can be written as $S= 
S_1\cup\ldots\cup S_k$ where $S_1,\ldots, S_k\subset S$ are distinct and 
relatively closed; moreover they are unique up to ordering. They are called 
\emph{irreducible components} of $S$.

The \emph{dimension}, denoted by $\dim$ is a function from definable 
sets of  Noetherian topological structure to natural numbers which satisfies:
\begin{description}
\item[(DP)] Dimension of a Point: $\dim(a) = 0$ for all $a\in M$;
\item[(DU)] Dimension of Unions: $\dim(S_1\cup S_2) = \max(\dim(S_1), 
\dim(S_2))$ for closed $S_1$, and $S_2$;
\item[(SI)] For any $S\clsubset U\opsubset M^n$ and any closed $S_1
\subsetneq S$, $\dim(S_1)<\dim(S)$;
\item[(AF)] Addition Formula: For any irreducible closed $S\clsubset U
\opsubset M^n$ and a projection $\pr:M^n\to M^m$,
\[\dim(S) = \dim(\pr(S)) + \min_{a\in\pr(S)}(\pr^{-1}(a)\cap S) \]
\item[(FC)] Fiber Condition: Given $S\clsubset U\opsubset M^n$ and a 
projection $\pr:M^n\to M^m$, there is a relatively open $V\opsubset\pr(S)$ 
such that, for any $v\in V$
\[\min_{a\in\pr(S)}(\dim(\pr^{-1}(a)\cap S)) = \dim(\pr^{-1}(v)\cap S)\] 
\end{description} 

A Noetherian, semi-proper topological structure together with a dimension 
function which satisfies the above conditions is said to be a \emph{Noetherian Zariski Structure}.

From the definition one can easily observe that a constructible set $Q$ 
can be written as 
\[Q = \bigcup_{i\leq k} S_i\setminus P_i\]
for some $k$, closed $S_i,P_i$ such that $P_i\subset S_i$ and $S_i$ irreducible. 
Therefore clearly,
\[\bar{Q}=\bigcup_{i\leq k}S_i\]

An important consequence of the axioms is that a Zariski Structure $M$ has 
quantifier elimination; in other words, every definable set is constructible. 

\begin{Fact}[Theorem 3.2.1 in~\cite{Zilber}]\label{thm:zar-qe}
A Zariski Structure $M$ has quantifier elimination. I.e. every definable set 
is constructible. 
\end{Fact}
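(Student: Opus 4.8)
The plan is to prove the equivalent statement that the image of a constructible set under a coordinate projection is again constructible; granting this, quantifier elimination follows by the usual induction on formulas, since the constructible sets already form a Boolean algebra (closed under $\wedge,\vee,\neg$ by definition) and an existential quantifier is exactly a projection. Writing an arbitrary projection as a composition of one-coordinate projections and using that projection commutes with finite unions, it suffices to treat a single map $\pr:M^n\to M^{n-1}$ applied to one locally closed piece of a constructible set. Decomposing closures into irreducible components and absorbing lower-dimensional overlaps, I would reduce the whole problem to showing that $\pr(S\setminus T)$ is constructible whenever $S\clsubset M^n$ is closed irreducible and $T\subsetneq S$ is a proper closed subset, so that $\dim T<\dim S$ by (SI).

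This I would prove by induction on $\dim S$, the base case $\dim S=0$ being trivial since a zero-dimensional closed set is finite and the image of a finite set is finite, hence closed. For the inductive step I would apply semi-properness (SP) to $S$ to obtain a proper closed $F\subsetneq\overline{\pr(S)}$ with $U:=\overline{\pr(S)}\setminus F\subseteq\pr(S)$ an open dense subset of $\overline{\pr(S)}=\overline{\pr(S\setminus T)}$, the last equality holding because $S\setminus T$ is dense in the irreducible $S$ and $\pr$ is continuous. I then split $\pr(S\setminus T)$ according to $U$ and $F$. The part lying over $F$ is $\pr((S\setminus T)\cap\pr^{-1}(F))$; since $\pr(S)$ is dense in $\overline{\pr(S)}\not\subseteq F$, the closed set $S\cap\pr^{-1}(F)$ is a proper subset of the irreducible $S$, hence of strictly smaller dimension by (SI), so this part is constructible by the induction hypothesis.

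The remaining part $\pr(S\setminus T)\cap U$ is the genuinely delicate one: for $v\in U$ the $S$-fibre is nonempty, and $v$ fails to lie in $\pr(S\setminus T)$ exactly when this whole fibre is swallowed by $T$. Points of $U\setminus\overline{\pr(T)}$ are automatically in $\pr(S\setminus T)$, so the doubtful locus sits inside $U\cap\overline{\pr(T)}$. To control it I would invoke (AF) together with (FC): shrinking $U$ to a smaller open dense set, the fibre dimension $\dim(\pr^{-1}(v)\cap S)$ may be taken constant, equal to the generic value $e=\dim S-\dim\overline{\pr(S)}$, on $U$. A fibre with $\pr^{-1}(v)\cap S\subseteq T$ then has its $T$-part of full dimension $e$, so the exceptional locus is governed by $\{x\in T:\dim_x(\pr^{-1}(\pr x)\cap T)\ge e\}$, which by upper semi-continuity of fibre dimension is closed in $T$ and of dimension $<\dim S$, whence its projection is constructible by induction. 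Reassembling the three pieces yields $\pr(S\setminus T)$ and closes the induction.

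I expect the main obstacle to be precisely this last step. Semi-properness alone controls $\pr(S)$ only up to a proper closed set and says nothing about how deleting the closed subset $T$ alters the image; the entire difficulty is to show that the set of fibres dropping entirely into $T$ is itself the projection of a lower-dimensional closed set. Two points require real care: first, re-deriving upper semi-continuity of fibre dimension from the bare axioms (DP), (DU), (SI), (AF) and (FC) rather than borrowing it from algebraic geometry; and second, when the generic $S$-fibres are reducible, distinguishing ``every component of the fibre lies in $T$'' from the coarser condition ``the $T$-fibre has dimension $e$'', which forces one to bring in the multiplicity of fibres and to argue that it, too, varies constructibly under (AF) and (FC). These fibre-theoretic facts are the technical heart of the argument.
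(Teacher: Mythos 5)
The paper never proves this statement: it is quoted as a Fact, with the proof living in the cited source \cite{Zilber} (Zilber's \emph{Zariski Geometries}, Theorem 3.2.1), so your attempt must be measured against that standard argument. Your skeleton matches it exactly — reduce to one-coordinate projections, then to $\pr(S\setminus T)$ with $S$ irreducible closed and $T\subsetneq S$ closed, then induct on $\dim S$, using (SP) to split the image into the dense open part $U$ and the part over $F$, the latter handled by (SI) and the induction hypothesis. But at the step you yourself identify as the heart of the matter — the fibres swallowed by $T$ — your plan has a genuine hole: you dispose of them by invoking upper semi-continuity of the fibre dimension $x\mapsto\dim_x(\pr^{-1}(\pr x)\cap T)$. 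This is a Chevalley-type theorem of algebraic geometry; it is not among the axioms (which do not even define a local dimension $\dim_x$), it is not proved in your sketch, and any derivation of it from (DP), (DU), (SI), (AF), (FC) would have to go through essentially the argument you are trying to avoid. As written, the central step of the induction rests on a statement you have no access to.

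The missing idea is that (AF) and (FC), applied to the irreducible \emph{components} $T_1,\dots,T_r$ of $T$ rather than to a semi-continuity locus inside $T$, already close the gap. For each $j$: either $\bar{\pr(T_j)}\subsetneq\bar{\pr(S)}$, and then every $v$ outside the proper closed set $\bar{\pr(T_j)}$ has empty $T_j$-fibre; or $\bar{\pr(T_j)}=\bar{\pr(S)}$, and then (AF) together with (SI) gives generic $T_j$-fibre dimension $\dim T_j-\dim\bar{\pr(S)}<\dim S-\dim\bar{\pr(S)}=e$, while (FC) gives a proper closed $W_j\subsetneq\bar{\pr(S)}$ outside of which the $T_j$-fibre dimension equals this generic value. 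Now set $R:=F\cup\bigcup_j\bar{\pr(T_j)}\cup\bigcup_j W_j$ (each union taken over the relevant case); this is a proper closed subset of the irreducible set $\bar{\pr(S)}$. For $v\in\bar{\pr(S)}\setminus R$ the $S$-fibre is nonempty of dimension at least $e$ (by (AF), $e$ is the \emph{minimum} fibre dimension), while by (DU) the $T$-fibre $\bigcup_j(\pr^{-1}(v)\cap T_j)$ has dimension $\max_j\dim(\pr^{-1}(v)\cap T_j)<e$; hence the $S$-fibre cannot be contained in $T$, so $\bar{\pr(S)}\setminus R\subseteq\pr(S\setminus T)$, and the remainder of $\pr(S\setminus T)$ lies over $R$, where $S\cap\pr^{-1}(R)$ has strictly smaller dimension by (SI) and your induction applies. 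Note that this also dissolves your second worry about reducible generic fibres and multiplicities: containment of the whole fibre in $T$ is tested by a single dimension comparison, because (DU) makes the dimension of a finite union the maximum of the dimensions, so no component-by-component analysis of the fibre is needed.
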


The axiom (SP) is an essential ingredient in the proof (see~\cite{Zilber}, 
Theorem 3.2.1). In fact, Sustretov later observed that (SP) is equivalent 
to quantifier elimination~\cite[Lemma 2.1.7]{Sustretov}.

By quantifier elimination, and the above observation one can extend 
the dimension to definable sets as follows:
\[\dim(Q) = \dim(\bar{Q}) = \max_{i\leq k}\dim(S_i)\]


\begin{Example}\label{ex:alg-closed}
Let $K$ be an algebraically closed field. Consider $K$ in the natural 
language for algebraic varieties, i.e. the primitive relations of the language 
are Zariski closed subsets of $K^n$ for $n\in\mathbb N$.

\begin{Theorem}
An algebraically closed field $K$ together with the natural language 
for varieties and the dimension is taken as the Krull dimension is a 
Zariski Structure. 
\end{Theorem}

The topological axioms 1-8 and Noetherianity are immediate from the 
definition of Zariski structures. The rest of the axioms are satisfied 
due to algebraic geometric results, which can be found in the literature. 
For some references see Theorem 3.4.1 in \cite{Zilber}.
\end{Example}

%
%
%

\subsection{Specialisations}\label{sec:spcl}

\begin{Definition}\label{def:specialization}
Let $\pi$ be a (partial) function $\pi:M\to M_0$ where $M_0$ is a Noetherian Zariski Structure, 
$M$ an elementary extension of $M_0$ and  for every formula $S(\bar{x})$ over $\emptyset$, defining 
an $M_0$-closed set and for every $\bar{a}\in M^n$ (which is also in the domain of $\pi$), $M\satisfies S(\bar{a})$ 
implies $M_0\satisfies S(\pi \bar{a})$. Such a function is said to be a \emph{specialisation}. We will 
also call the tuple $(M,\pi)$ a \emph{specialisation}.

A specialisation is said to be \emph{$\kappa$-universal} if, given any $M^\prime\succeq M\succeq M_0$, 
any $A\subseteq M^\prime$ with $|A|<\kappa$ and a specialisation $\pi_A:M\cup A\to M_0$ extending 
$\pi$, there is an embedding $\sigma:A\to M$ over $M\cap A$ such that $\pi_A|A = \pi\circ\sigma$.

An $\omega$-universal specialisation is said to be \emph{universal}.
\end{Definition}

In other words specializations are $\mathcal L$-homomorphisms from substructures of $M$ to $M_0$. 
The following lemma, originally due van den Dries \cite{vdD} and proved more generally for first order 
structures, gives a description of $M_0$-closed sets in terms of specializations.

\begin{Lemma}
 For each model $M$ of $Th(M_0)$, and each specialization $\pi:A\to M_0$ with $A\subseteq M$, if $T(x)$ is
 and $\mathcal L$ - formula such that $M\models T(a)$ implies $M_0\models T(\pi(a))$ then, there is a positive 
 quantifier free formula $S(x)$ such that $Th(M_0)\vdash T(x)\longleftrightarrow S(x).$
\end{Lemma}
 
In the rest of this subsection I will give the key properties and definitions about specializations. 


The following facts are Proposition 2.2.7 and 2.2.15 in 
\cite{Zilber}. The first one is due to more general results of Weglorz \cite{Weglorz}.

\begin{Fact}\label{thm:existence-special}
Let $M$ be a quasi-compact, and complete Zariski structure, and $M^\prime\succeq M$. Then 
there is a total specialisation $\pi:M^\prime \to M$. Moreover, any (partial) specialisation 
can be extended to a total specialisation. 
\end{Fact}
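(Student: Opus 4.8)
The plan is to construct $\pi$ one point at a time, using quasi-compactness to choose each value and Zorn's lemma to reach a total map; the two assertions then differ only in the partial specialisation one starts from. First I would consider the set of all partial specialisations $\rho\colon A\to M$ with $A\subseteq M^\prime$, ordered by extension, and observe that the union of a chain of such maps is again a specialisation: the defining condition only refers to finite tuples together with closed formulas over $\emptyset$, so it is preserved under directed unions. By Zorn's lemma there is then a maximal partial specialisation $\pi$ extending the chosen base --- the identity $\operatorname{id}_M$ for the first claim (which is a specialisation precisely because $M\preceq M^\prime$), and the given partial specialisation for the ``moreover'' clause. Everything reduces to showing $\Dom(\pi)=M^\prime$.

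Suppose not and pick $a\in M^\prime\setminus\Dom(\pi)$; I would find $b\in M$ with $\pi\cup\{(a,b)\}$ still a specialisation, contradicting maximality. The admissible values $b$ are exactly those lying in every closed set of the form $S(\pi\bar{a}^\prime,M):=\{c\in M: M\satisfies S(\pi\bar{a}^\prime,c)\}$, as $S(\bar x,y)$ ranges over closed formulas over $\emptyset$ and $\bar{a}^\prime$ over tuples from $\Dom(\pi)$ with $M^\prime\satisfies S(\bar{a}^\prime,a)$; each such set is closed in $M$ by topological axiom (8). Writing $\mathcal F$ for this family, quasi-compactness (QC) guarantees $\bigcap\mathcal F\neq\emptyset$ as soon as $\mathcal F$ is finitely consistent, and any $b\in\bigcap\mathcal F$ then does the job.

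The heart of the argument, and the step where completeness enters, is finite consistency. Given finitely many members, I would combine them into a single closed condition $S(\bar{a}^{\prime\prime},y)=\bigwedge_i S_i(\bar{a}^\prime_i,y)$, where $\bar{a}^{\prime\prime}$ enumerates all parameters involved, so that $M^\prime\satisfies \exists y\,S(\bar{a}^{\prime\prime},y)$ is witnessed by $a$. By Properness (P) the projection $T(\bar x):=\pr(S)$ forgetting the last coordinate is closed over $\emptyset$, and since projections commute with interpretation we get $M^\prime\satisfies T(\bar{a}^{\prime\prime})$. As $\pi$ is already a specialisation and $\bar{a}^{\prime\prime}\subseteq\Dom(\pi)$, its defining property yields $M\satisfies T(\pi\bar{a}^{\prime\prime})$, that is, $M\satisfies \exists y\,S(\pi\bar{a}^{\prime\prime},y)$; a witness $b\in M$ then satisfies every chosen $S_i(\pi\bar{a}^\prime_i,b)$ and so lies in the corresponding finite intersection. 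This proves finite consistency and closes the extension step.

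I expect the transfer in the last paragraph to be the main obstacle: the point is to use (P) to keep the existential quantifier \emph{inside} the closed category, so that the hypothesis that $\pi$ is a specialisation can legitimately be applied to the projected condition, and then to use (QC) to upgrade finite consistency into an actual single value $b$. The Zorn's lemma bookkeeping and the verification that $\operatorname{id}_M$ is a specialisation are routine, and Noetherianity is not needed for this construction --- only the two completeness-type axioms (P) and (QC).
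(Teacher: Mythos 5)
The paper itself gives no proof of this Fact: it is quoted from Propositions 2.2.7 and 2.2.15 of \cite{Zilber}, with the first claim attributed to more general results of Weglorz \cite{Weglorz}. So there is nothing in the paper to compare you against line by line; what you have done is reconstruct the argument behind the citation, and your reconstruction is correct. It is essentially the classical Weglorz-style equational-compactness argument: Zorn's lemma on partial specialisations (the defining condition only mentions finite tuples, so unions of chains are specialisations, and $\operatorname{id}_M$ is one because $M\preceq M^\prime$); a one-point extension step in which the admissible values for $a$ form the family $\mathcal F$ of fibres $S(\pi\bar a^\prime,M)$, each closed by axiom (8); (QC) to turn finite consistency of $\mathcal F$ into an actual common value $b$; and (P) to prove finite consistency by keeping the existential quantifier inside the closed category. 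Identifying (P) as the step that legitimises applying the specialisation hypothesis to $\exists y\,S(\bar x,y)$ is exactly the right insight, and the observation that the two assertions differ only in the base of the Zorn argument is also correct.

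Two points in your sketch deserve more care, though neither is a genuine gap. First, the claim that $T:=\pr(S)$ is ``closed over $\emptyset$'' and your phrase ``projections commute with interpretation'': in the paper's conventions $\mathcal L_{\mbox{\tiny Zar}}$ has a predicate for \emph{every} closed subset of every power of the base structure, so $\pr(S)$ is automatically $\emptyset$-definable; but one still needs a second use of elementarity, namely that the identity $\pr(S)=T$, valid in $M$ by (P), transfers to $M^\prime$ as the sentence $\forall\bar x\,(\exists y\,S(\bar x,y)\leftrightarrow T(\bar x))$ --- this is what makes $\exists y\,S$ define a closed set in $M^\prime$ and puts it within the scope of the specialisation condition. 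If instead one reads ``closed'' abstractly, so that $\pr(S)$ is a priori only a fibre $T(\bar c,M^n)$ of an $\emptyset$-closed set over parameters $\bar c\in M$, then one needs Noetherianity (DCC) to make that presentation finite and hence first-order transferable, and one must either start Zorn from $\operatorname{id}_M$ so that $\pi$ fixes $\bar c$, or apply the specialisation condition directly to the $\emptyset$-formula $\exists y\,S(\bar x,y)$; so your closing remark that Noetherianity is never needed is tied to the strong naming convention. Second, in the extension step you should reduce closed conditions in which $a$ occurs several times or in arbitrary positions to the normal form $S(\bar a^\prime,a)$ via the permutation axiom (7) and identification of variables through the diagonal (4); this is routine, but it is what makes $\mathcal F$ capture the full specialisation condition for $\pi\cup\{(a,b)\}$.
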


\begin{Fact}\label{thm:existence-univ.special}
Let $M$ be a Zariski structure, and $M^\prime\succeq M$. Then there is a $\kappa$-universal 
specialisation $(M^\prime, \pi)$. Moreover, if $M$ is quasi-compact, then $\pi$ is a total 
specialisation.  
\end{Fact}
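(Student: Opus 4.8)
The plan is to construct the pair $(M',\pi)$ by a transfinite recursion, realising $M'$ as the union of an elementary chain over $M$ while simultaneously extending $\pi$, and to arrange by bookkeeping that every ``specialisation demand'' of size less than $\kappa$ that can be realised at all is already realised inside $(M',\pi)$. Throughout, the two engines are the extension property for specialisations from Fact~\ref{thm:existence-special} (every partial specialisation extends) and the saturation of $M'$ secured by taking the chain sufficiently long.

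First I would reduce $\kappa$-universality to a one-point realisation statement. Given a challenge $(A,\pi_A)$ with $A\subseteq M''\succ M'$, $|A|<\kappa$, and $\pi_A\supseteq\pi$ a specialisation on $M'\cup A$, I would build the required embedding $\sigma:A\to M'$ over $C:=M'\cap A$ by recursion along an enumeration of $A$. At a typical step I have a partial elementary map $\sigma_0$, defined on $C$ together with the fewer than $\kappa$ elements of $A$ handled so far, and I must send the next $a\in A$ to some $a'\in M'$ with $\sigma_0\cup\{a\mapsto a'\}$ still elementary and with $\pi(a')=\pi_A(a)$. By Fact~\ref{thm:zar-qe} (quantifier elimination) the elementarity requirement is a requirement on closed conditions only, so the whole step amounts to realising in $(M',\pi)$ a single element whose type over a set of size less than $\kappa$ is prescribed and whose $\pi$-value is a prescribed $b\in M$.

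Hence the heart of the construction is to guarantee the property $(\ast)$: for every $D\subseteq M'$ with $|D|<\kappa$, every complete type $p$ over $D$, and every $b\in M$ for which the demand $p(x)\cup\{\pi(x)=b\}$ is \emph{consistent} --- meaning it is realised by some element in some specialisation extending $\pi$ --- there is $a'\in M'$ realising $p$ with $\pi(a')=b$. I would build $(M',\pi)$ so that $(\ast)$ holds by enumerating all such demands and, at successor stages of the chain, passing to $M_{i+1}\succ M_i$ containing a realisation $a'$ of $p$ and extending $\pi_i$ to $\pi_{i+1}$ with $\pi_{i+1}(a')=b$ via Fact~\ref{thm:existence-special}. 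Consistency of the demand is exactly what legitimises this step: the closed conditions in $p$ force $b$ into an intersection of closed subsets of $M$, and that intersection is precisely the set of admissible values. When $M$ is quasi-compact this intersection is nonempty for every finitely consistent demand by $(\mathrm{QC})$, which, run on an arbitrary point of $M'$ in place of a prescribed type, is also how one sees that $\pi$ may be taken total --- this yields the ``moreover'' clause. Standard bookkeeping along a chain whose length depends on $\kappa$ and $|M|$ ensures that every demand over every small subset is eventually met and that $M'=\bigcup_i M_i$ is $\kappa$-saturated, so that the elements $a'$ required in the reduction above genuinely exist.

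The main obstacle I anticipate is twofold. First, one must verify that the limit map $\pi=\bigcup_i\pi_i$ is still a genuine specialisation, that is, that it respects every $\emptyset$-definable closed set and not merely the finitely many conditions touched at each individual stage; this forces one to choose the local extensions of Fact~\ref{thm:existence-special} coherently and to check that no closed condition is violated in passing to the union. Second, and more delicately, one must show that the notion of ``consistency'' of a demand --- phrased above as realisability in some specialisation --- is faithfully captured by the closed-set/quasi-compactness criterion, so that the demands realised during the construction are exactly those the universality definition will later confront us with. This is the point at which the topological axioms, and $(\mathrm{QC})$ in particular, must be married to the quantifier elimination of Fact~\ref{thm:zar-qe} in order to pass back and forth between model-theoretic types and the intersections of closed sets that any specialisation is obliged to respect.
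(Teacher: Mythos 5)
First, a point of comparison you could not have known: the paper does not prove this statement at all --- it is quoted verbatim as Proposition 2.2.15 of Zilber's book \cite{Zilber} --- so your argument has to stand on its own. Its skeleton is the right one: build $(M',\pi)$ as a long elementary chain with bookkeeping, absorb witnesses of consistent demands at successor stages, take unions at limits, and invoke quasi-compactness for totality. (Reading the statement existentially, i.e.\ constructing $M'$ rather than taking it as given, is also forced: by the paper's own Remark, $\kappa$-universality implies $\kappa$-saturation of $M'$, so no fixed unsaturated $M'$ could carry such a $\pi$.) The genuine gap sits in your opening move, the ``reduction to a one-point realisation statement.'' In the verification, at a typical step you must realise in $(M',\pi)$ the type $\sigma_0\bigl(tp(a/\operatorname{dom}\sigma_0)\bigr)$ over the \emph{image} parameters $\sigma_0(\operatorname{dom}\sigma_0)\subseteq M'$, with prescribed value $b=\pi_A(a)$. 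Property $(\ast)$ applies only if this \emph{moved} demand is consistent, i.e.\ realisable in some specialisation extending $(M',\pi)$. But what the challenge $(A,\pi_A)$ witnesses is consistency of the demand over the \emph{original} parameters, which lie in $M''\setminus M'$; nothing transports that witness across the partial map $\sigma_0$. The obstruction is concrete: consistency of a demand $(q,b)$ over $D'\subseteq M'$ essentially amounts to consistency of $q(x)\cup\{\neg S(x,\bar d)\,:\,S \mbox{ closed over }\emptyset,\ \bar d\subseteq\Dom{\pi},\ M\not\models S(b,\pi\bar d)\}$, and these side conditions involve parameters ranging over \emph{all} of $\Dom{\pi}$, far outside the small set where $\sigma_0$ is defined; your earlier choices of images may entangle $\sigma_0(\operatorname{dom}\sigma_0)$ with $\Dom{\pi}$ in ways the original parameters were not, and your proposal offers no argument that this cannot happen. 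Your second ``anticipated obstacle'' gestures at this region but misdiagnoses it: the problem is not matching semantic consistency to a closed-set criterion, it is that the demands the verification actually confronts you with have moved parameters and hence no consistency witness.

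There are two repairs, and both change the architecture. Either (a) carry a witness through the induction --- a specialisation $(N_\alpha,\rho_\alpha)$ extending $(M',\pi)$ containing an elementary-over-$C$ copy of all of $A$ whose handled elements sit at their $\sigma$-images --- which requires an amalgamation lemma for specialisations over a common sub-specialisation, a substantial statement you neither formulate nor prove; or (b), the cleaner route: run your chain construction for demands given by types in \emph{fewer than $\kappa$ variables} together with a prescribed partial value function into $M$, and in the verification realise $tp(A/C)$ and $\pi_A|_A$ in a single step. Then the challenge $(M'',\pi_A)$ itself is the consistency witness --- its parameter set $C=M'\cap A$ already lies inside $M'$ and nothing gets moved --- and the transport problem never arises. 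Two smaller corrections: your appeal to Fact~\ref{thm:existence-special} at successor stages is both illegitimate (it requires $M$ quasi-compact \emph{and} complete, unavailable for a general Zariski structure) and unnecessary, since the consistency witness $(N,\rho)$ \emph{is} the required next stage $(M_{i+1},\pi_{i+1})$; and your first anticipated obstacle, about the union map at limits, is a non-issue, because the specialisation condition is checked one tuple and one closed formula at a time, and every tuple of the union already lies in some stage.
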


The following is one of the natural examples of a specialization.

\begin{Example}\label{ex:acvf}
 Let $K$ be an algebraically closed valued field, and $k$ the residue field. Suppose $k\succeq K$ 
 as fields.Consider $K$ with the language $\mathcal L_{\mbox{\tiny Zar}}(k)$ for algebraic 
 varieties defined over $k$. Then the residue map $res:K\to k$ where $k$ is a specialisation. 
\end{Example}

However, note that $res$ is a partial specialisation in this example. One can easily give an 
example of a total specialisation. 

\begin{Example}[Example 2.2.4 in \cite{Zilber}.]\label{ex:spcl-projective-line}
Let ${}^*\mathbb R$ be the hyperreal numbers. Then the standard part map from ${}^*\mathbb R$ 
to $\mathbb R$ (which is a partial map) induces a total specialisation $\pi: \mathbb P^1{}^*\mathbb R  \to \mathbb P^1\mathbb R$. 
Note that ${}^*\mathbb R$ interprets an elementary extension ${}^* \mathbb C$ of $\mathbb C$, 
therefore one can consider $\pi: \mathbb P^1{}^*\mathbb C\to \mathbb P^1\mathbb C$ in the same 
way, and show that it is a total specialisation.  
\end{Example}

Specializations can be thought as replacements to the topology given by the positive quantifier 
free formulas. This remark can be made precise in the following:

\begin{Definition}
 Let $\pi:M\to M_0$ be a specialization, a definable relation $S\subseteq M_0^n$ is said to be
 $\pi$-closed whenever $\pi({}^*S)\subseteq S$ where ${}^*S$ is the interpretation of $S$ in $M$. 
\end{Definition}

\begin{Fact}[Exercise 2.2.9 in \cite{Zilber}]
The family of $\pi$-closed sets satisfies the topological axioms (1)-(8). 
\end{Fact}

\begin{Fact}[Proposition 2.2.24 in \cite{Zilber}]\label{fct:specialization-topology}
 If $\pi:M\to M_0$ is a universal specialization then any definable relation $S\subset M_0$ is 
 closed if and only if it is $\pi$-closed. 
\end{Fact}

\section{Specialisations and Algebraically Closed Valued Fields}\label{sct:acvf}
Consider the structure $(K,k,res)$ where $K$ is an algebraically closed 
valued field, $k$ its residue field and $res:K\to k$ is the residue map. 
Furthermore, consider $k$ as a Zariski structure with respect to the natural 
Zariski language associated to it. Denote this language as $\mathcal L_{\mbox{\tiny Zar}}$. 
It is clear that $K$ is also an $\mathcal L_{\mbox{\tiny Zar}}$-structure. 
Assume $\Char K = \Char k$. Then both $K$ and $k$ are algebraically 
closed fields over the same prime subfield. Hence one can (uncanonically) 
embed $k$ into $K$, also one can choose this embedding in such a way 
that $res$ is identity on $k$ (by replacing the isomorphic copy of $k$ in 
$K$ with $k$ if necessary). Then by quantifier elimination (Fact~\ref{thm:zar-qe}) 
this embedding is elementary, and we have $k\prec K$.

as a specialization. For this consideration 
we are forced to make some restrictions. As we need to have $k\prec 
K$, we restrict ourselves to the case $\Char K = \Char k$. Given that 
we are in the equi-characteristic, both $K$ and $k$ are algebraically 
closed fields over the same prime subfield. Then one can (uncanonically) 
embed $k$ into $K$, also one can choose this embedding in such a way 
that $res$ is identity on $k$ (by replacing the isomorphic copy of $k$ in 
$K$ with $k$ if necessary). But then by quantifier elimination (Fact~\ref{thm:zar-qe}) 
this embedding is elementary, and we have $k\prec K$.

We will work with the relational language $\mathcal L_{\mbox{\tiny Zar}}$, 
which only has predicates for varieties in $k^n$ (all $n\in\mathbb N$), 
whose relations are Zariski closed subsets of $k^n$ (all $n\in\mathbb N$), 
and with the expanded language $\mathcal L^{res} : =\mathcal L_{\mbox{\tiny Zar}}
\cup\{res\}$ where $res$ will be interpreted as the residue map. Note 
that $k$ is $\mathcal L^{res}$ definable in K by the formula $\exists x (res(x) = y)$. 
Hence we will write our structures as $(K,res)$. From now on $K$ will 
always be an algebraically closed valued field, $k$ the (definable) residue 
field, $res:K\to k$ the residue map, $\mathcal O$ the corresponding valuation 
ring, and $\mathcal M$ the valuation ideal. 

Let $\mathcal L_{\mbox{\tiny div}}=\{+,-,\cdot,0,1,|\}$ be 
the ring language augmented with a binary relation $|$ which is interpreted 
as $x|y$ if and only if $v(x)\leq v(y)$. 
Recall that ACVF is the complete theory given by the following sentences:
\begin{enumerate}
\item $K$ is algebraically closed.
\item Valuation axioms. 
\item There are $x,y\in K^\times$ such that $v(x) < v(y)$, where $v$ is the valuation on $K$.
\item Characteristic of $k$ and $K$.
\end{enumerate}
in the language $\mathcal L_{\mbox{\tiny div}}$ and also in various other 
languages due to Robinson's analysis of algebraically closed valued fields 
\cite{Macpherson, Robinson}.

It is easy to see that one can write sentences 1, 2 and 4 in the language 
$\mathcal L^{res}$, one can also write 3 but it takes a bit more effort. 
First observe that $\mathcal O$ is $\mathcal L^{res}$ definable, by the 
formula $\exists y (res(x) = y)$. Then define $Res: K\times K\to k$ as follows:
\[
Res(x,y) = \left\{ \begin{array}{ll}
 res(xy^{-1}) &\mbox{ if $v(x)\geq v(y)$} \\
  0 &\mbox{ otherwise}
       \end{array} \right.
\]
So $Res$ is $\mathcal L^{res}$ definable by the formula 
\[\psi(x,y,z): (xy^{-1}\in\mathcal O \wedge z=res(xy^{-1})) \vee (xy^{-1}
\not\in\mathcal O \wedge z=0)\] 
Now sentence 3 can can be written as $\exists x \exists y (Res(x,y)=0)$. 
Therefore, one can indeed consider the theory ACVF to be living inside 
the theory specialisations (with some restrictions).

In the next theorem I will show any specialization can be extended in such 
a way that its domain is a valuation ring. This is basically due to Chevalley's 
Place Extension Theorem. However, the essential arguments are already 
present in Weil's Foundations of Algebraic Geometry \cite{Weil}. 

\begin{Theorem}\label{thm:ext-res}
Let $k$ be an algebraically closed field and $K\succeq k$.
Given a specialization $\pi: K\to k$, it defines a residue map, $res: 
K\to k$ of a valuation ring $\mathcal O$.
\end{Theorem}
\begin{proof}
Let $R=\Dom \pi$. Clearly we can assume $R$ is ring containing $k$ 
(if not, take $R$ to be the ring generated by $\Dom{\pi}$. Then $\pi$ 
naturally extends to a ring homomorphism from $R$ to $k$ which is a 
specialization extending $\pi$ to $R$). Then $\pi$ is a surjective ring 
homomorphism with $\Ker\pi =\mathcal V_0$. Hence $\mathcal V_0
\ideal R$ is maximal, and in particular prime. Then by Chevalley's Place 
Extension Theorem, there is a valuation ring $\mathcal O$ to which $\pi$ 
extends. 

Let us denote the extension of $\pi$ to $\mathcal O$ by $res$ and write 
$res:\mathcal O\to k$. Then $\Ker {res}$ is the unique maximal ideal (i.e. 
the valuation ideal) of $\mathcal O$ and $\mathcal O/\Ker{res}\simeq k$. 
\end{proof}

Hence the specialization denoted by $res$ with the domain $\mathcal 
O$ above is a residue map coming from a valuation of the field $K$ 
with residue field $k$.

\begin{Remark}\label{rmk:spcl-residue}
If $\pi:K\to k$ is a specialization of algebraically closed fields, by the 
above theorem we can consider its extension to a residue map $res: 
K\to k$. Therefore whenever we have a specialization $\pi$ we will 
consider the residue map $res$ it defines instead of it. 
\end{Remark}

\subsection{Some Facts on Extensions of Valued Fields}
I will recall some well known facts on extensions of valued fields. They 
are widely available in the literature on valued fields, e.g. in~\cite{vdD2}.
In this subsection, and also in the next one (subsection~\ref{ssect:QE-ACVF}) 
we will consider a valued field as tuple $(F,\mathcal O_F)$ where $F$ is an 
algebraically closed field and $\mathcal O_F$ is a valuation subring of $F$. 
If $F$ is algebraically closed we will call $(F,\mathcal O_F)$ an algebraically 
closed valued field.

\begin{Definition}
Let $(F,\mathcal O_F)$ and $(L,\mathcal O_L)$ be two valued fields. A valued field 
embedding $i:(F,\mathcal O_F)\to (L,\mathcal O_L)$ is a field embedding $i:F\to L$ 
such that $i(\mathcal O_F) = \mathcal O_L$. 
\end{Definition}

\begin{Fact}[Corollary 3.16 in~\cite{vdD2}]\label{fct:3.16}
Let $(F,\mathcal O_F)$ be a valued field. Let $F^{\mbox{\tiny alg}}$ be its algebraic 
closure and $\mathcal O_{F^{\mbox{\tiny alg}}}\subseteq F^{\mbox{\tiny alg}}$ be a 
valuation ring which dominates $\mathcal O_F$. Then any valued field embedding $i:(F,
\mathcal O_F)\to (L,\mathcal O_L)$ to an algebraically closed $L$ can be extended to 
a valued field embedding $(F^{\mbox{\tiny alg}},\mathcal O_{F^{\mbox{\tiny alg}}})\to 
(L,\mathcal O_L)$. 
\end{Fact}

\begin{Fact}[Lemma 3.22 in~\cite{vdD2}]\label{fct:3.22}
Let $(F,\mathcal O_F)$ be a valued field, let $x$ be transcendental over $F$ and let 
$L=F(x)$. Then there is a unique valuation ring $\mathcal O_L\subseteq L$ with $
\mathcal O_F\subseteq \mathcal O_L$ such that $x\in\mathcal O_L$ and $res(x)$ 
is transcendental over the residue field $L_0$. Furthermore, $L_0=M_0(res(x))$ and 
$\Gamma_F = \Gamma_L$ where $\Gamma_F$ and $\Gamma_L$ are the corresponding 
value groups. 
\end{Fact}

\begin{Fact}[Lemma 2.23 in~\cite{vdD2}]\label{fct:2.23}
Let $(F,\mathcal O_F)$ be a valued field with valuation $v:F^\times\to\Gamma_F$, let 
$x$ be transcendental over $F$ and let $L=F(x)$. If $h$ is an element of an ordered abelian 
group extending $\Gamma_F$ such that $nh\not\in\Gamma_F$ for all $n\geq 1$, then 
$v$ extends uniquely to a valuation $w:L^\times\to\Gamma_F+h\mathbb Z$. Also $F_0 = 
L_0$. 
\end{Fact}

\begin{Fact}[Lemma 3.30 in~\cite{vdD2}]\label{ftc:3.30}
Let $(F,\mathcal O_F)\subseteq (L,\mathcal O_L)$ be a valued field extension with $M_0
\subseteq L_0$, also denote the valuation of $L$ by $v$. Let $m_1,\ldots,m_n\in M$ and 
$x\in L\setminus M$. Suppose $v(x-m_i)\in vM^\times$ for all $i=1,\ldots,n$. There there 
is an element $m\in M$ such that for all $i=1,\ldots,n$, $v(m-m_i) = v(x-m_i)$.
\end{Fact}

\subsection{Quantifier Elimination for $Th(K)^{res}$}\label{ssect:QE-ACVF}
Let $k$ be an algebraically closed field considered as a Zariski 
structure in its natural Zariski language $\mathcal L_{\mbox{\tiny Zar}}$. 
Let $K\succeq k$, we will consider the structure $(K,k,\pi)$ in the 
language $\mathcal L_{\mbox{\tiny Zar}}^{\pi,\infty}(P) := \mathcal L_
{\mbox{\tiny Zar}}\cup \{P,\pi,\infty\}$ where $P$ is a unary predicate 
interpreted as $k$, $\pi$ is a function symbol which will be interpreted 
as a specialization $\pi:K\to k$, and $\infty$ is a new constant to be 
interpreted as a new element added to $k$. For dealing with substructures 
in a natural way we will also assume that the language $\mathcal L_{\mbox{\tiny Zar}
}^{\pi,\infty}(P)$ contains function symbols $+$ and $\cdot$ to be 
interpreted as addition and multiplication. Since adding $+$ and $\cdot$ 
to the language $\mathcal L_{\tiny Zar}$ to be interpreted as addition 
and multiplication is a definitional expansion, this is harmless.

Let $Th(K)^{res}$ be the theory given by the following:
\begin{enumerate}
\item $Th(K)$, the $\mathcal L_{\mbox{\tiny Zar}}$ theory of $K$.
\item $\infty$ will be a new element added to the residue field:\\
$P(\infty)\wedge \forall a(P(a)\implies (\infty+a=a+\infty = \infty \wedge a\cdot\infty=\infty\cdot a = \infty))$.
\item $res$ is a non-trivial residue map:\\
$\forall x(res(x)\neq\infty \vee res(x^{-1})\neq\infty) \wedge \exists z (\neg P(z))$
\item The axiom scheme saying $res$ is a specialization:\\
$\{\forall x_1,\ldots, x_n (S(x_1,\ldots, x_n)\implies S(res(x_1,\ldots, x_n))): n\in\mathbb N, S\in
\mathcal L_{\mbox{\tiny Zar}}\}$
\item $res$ is identity on the residue field:\\
$\forall x (P(x)\implies res(x)=x)$
\end{enumerate}

Let $(M,M_0,res)$ be a model of $Th(K)^{res}$. Then $res:M\to M_0\cup
\{\infty\}$ is a place. So $res$ corresponds to a unique valuation ring, 
denote it by $\mathcal O_M$. Therefore $(M, \mathcal O_M)$ is a non-trivial 
valued field. For the sake of notation, when there is no risk of ambiguity, 
instead of $res(m)$ (given that $res$ is defined on $m$), I will write $\bar m$. 
I.e. whenever $res$ is defined on $m\in M$ we define $\bar{m} :=res(m)$. This 
convention will only be valid in this section.

We will use the following fact from~\cite{Henson} for proving $Th(K)^{res}$ 
admits quantifier elimination, in the language $\mathcal L_{\mbox{\tiny Zar}}
^{\pi,\infty}(P)$. 

\begin{Fact}[Theorem 5.7 in~\cite{Henson}]
For a theory $T$ in a language $\mathcal L$ with $\abs{\mathcal L} = \kappa$, 
the following are equivalent:
\begin{enumerate}
\item $T$ admits quantifier elimination.
\item For any two models $\mathcal M$ and $\mathcal N$ of $T$ where $\abs{M}
\leq\kappa$ and $\mathcal N$ is $\kappa^+$-saturated, and for any proper 
substructure $S$ of $\mathcal M$ whit an embedding $i:S\to\mathcal N$, 
one can extend $i$ to an embedding $j:S^\prime\to\mathcal N$ for some 
proper substructure $S^\prime$ of $\mathcal M$ strictly containing $S$. 
\end{enumerate}
\end{Fact}

\begin{Remark}
Quantifier elimination for more complicated theories of algebraically closed 
valued fields are present in the literature (e.g.~\cite{Delon2012,HHM,HHM2, 
Macpherson, vdD2, Robinson}).

In particular, the main argument we used in the proof of quantifier elimination 
for $Th(K)^{res}$ (Lemmas~\ref{lem:valued-residue-preservation}, \ref{lem:substr-to-alc} 
and \ref{lem:QE-complete}) are very close to Hrushovski and Kazdan's proof of 
quantifier elimination for a three sorted theory algebraically closed valued fields~\cite[Lemma 6.3]{HK}. 
It is also similar to a quantifier elimination result by Leloup~\cite{Leloup}. 
\end{Remark}



\begin{Lemma}\label{lem:valued-residue-preservation} 
Let $(S,S_0,res)$ and $(L,L_0,res)$ be two valued fields where $L$ is 
an algebraically closed field. Consider the extension $(S^{\mbox{\tiny alg}},
S^{\mbox{\tiny alg}}_0,res)$ of $(S,S_0,res)$ where $S^{\mbox{\tiny alg}}$ 
is the algebraic closure of $S$. Then any $\mathcal L_{\mbox{\tiny Zar}}^
{\pi,\infty}(P)$-embedding $i:(S,S_0,res)\to (L,L_0,res)$ extends to an 
$\mathcal L_{\mbox{\tiny Zar}}^{\pi,\infty}(P)$-embedding $(S^{\mbox{\tiny alg}},
S^{\mbox{\tiny alg}}_0,res)\to (L,L_0,res)$.
\end{Lemma}
\begin{proof}
First we will prove that there is an $\mathcal L_{\mbox{\tiny Zar}}^{\pi,
\infty}(P)$-embedding 
\[j_1:(SS^{\mbox{\tiny alg}}_0, S^{\mbox{\tiny alg}}_0, res)\to (L,L_0,res)\] 
extending $i$.

Observe that $S_0$ is relatively algebraically closed in $S$. So $S$ is a 
purely transcendental extension of $S_0$. Therefore $S$ and $S^{\mbox
{\tiny alg}}_0$ are linearly disjoint over $S_0$. Moreover, $S\otimes_{S_0} 
S^{\mbox{\tiny alg}}_0$ is a field since $S^{\mbox{\tiny alg}}_0$ is an 
algebraic extension of $S_0$. Hence the compositum $SS^{\mbox{\tiny alg}}_0$ 
is isomorphic (as fields) to the tensor product of fields $S\otimes_{S_0} S^{\mbox
{\tiny alg}}_0$.

We know that $i$ embeds $S$ into $L$ and $S_0$ into the algebraically 
closed field $L_0$. Extend $i$ to a field embedding $j_0:S^{\mbox{\tiny alg}}
_0\to L_0$. Then, $j_0(S^{\mbox{\tiny alg}}_0) = i(S_0)^{\mbox{\tiny alg}}$. 
As above, $i(S_0)$ is relatively algebraically closed in $i(S)$. Therefore 
$i(S)$ and $j_0(S^{\mbox{\tiny alg}}_0)$ are linearly disjoint over $i(S_0)$.

Then there is an isomorphism $j_1:SS^{\mbox{\tiny alg}}_0\to i(S)j_0(S^{
\mbox{\tiny alg}}_0)$ such that $j_1(x) = i(x)$ for all $x\in S$ and $j_1(y) = 
j_0(y)$ for all $y\in S^{\mbox{\tiny alg}}_0$. It is clear that $j_1$ preserves 
$P$ as $j_1(S^{\mbox{\tiny alg}}_0) = i(S_0)^{\mbox{\tiny alg}}\subseteq 
L_0 = P(L)$. Moreover, $j_1$ preserves the map $res$. \todo{WHY??}

Clearly $S^{\mbox{\tiny alg}}$ is an algebraic closure of $SS^{\mbox{\tiny alg}}_0$. 
Extend $j_1$ to a field embedding $j:S^{\mbox{\tiny alg}}\to L$. Since $j$ 
extends $j_1$, it maps $S^{\mbox{\tiny alg}}_0$ into $L_0 = P(L)$. Hence 
$j$ preserves $P$. Moreover, $j^{-1}(\mathcal O_L)$ is a valuation ring 
of $S^{\mbox{\tiny alg}}$ lying above $\mathcal O_S$. Since $S^{\mbox{
\tiny alg}}$ is normal over $S$, there is a $\sigma\in \Aut{S^{\mbox{\tiny alg}}
|S}$ such that $j^{-1}(\mathcal O_L) = \sigma(\mathcal O_{S^{\mbox{\tiny alg}}})$.

Since $SS^{\mbox{\tiny alg}}_0$ is a normal extension of $S$, $\sigma$ fixes 
$SS^{\mbox{\tiny alg}}_0$ setwise. Moreover $\sigma$ fixes $S^{\mbox{\tiny alg}}_0$ 
setwise. Indeed, since $S^{\mbox{\tiny alg}}_0$ is algebraically closed in $SS^
{\mbox{\tiny alg}}_0$, any automorphism of $SS^{\mbox{\tiny alg}}_0$ over 
$S$ fixes $S^{\mbox{\tiny alg}}_0$ setwise. Hence $\sigma$ fixes $S^{\mbox{\tiny alg}}_0$ 
setwise. 

Then $j\sigma:(S^{\mbox{\tiny alg}} , \mathcal O_{S^{\mbox{\tiny alg}}})\to 
(L,\mathcal O_L)$ is a valued field embedding which maps $S^{\mbox{\tiny alg}}_0$ 
into $L_0$, and hence preserves $P$. 

Next we will show that $j\sigma$ preserves the map $res$. Let $\beta := res(b)$. 
Then $\beta$ is the only element in $S_0^{\mbox{\tiny alg}}$ with $v(b-\beta) 
>v(b)$. As $j\sigma$ is a valued field embedding, $v(j\sigma(b) - j\sigma(\beta)) 
> v(j\sigma(b))$. On the other hand, $\bar{j\sigma(b)}$ is the only element 
in $j\sigma(S_0^{\mbox{\tiny alg}})$ such that $v(j\sigma(b) - \bar{j\sigma(b)}) 
> v(j\sigma(b))$. Hence $j\sigma(\beta) = \bar{j\sigma(b)}$.

\end{proof}

\begin{Lemma}\label{lem:substr-to-alc}
Let $(M, M_0, res)$ and $(L,L_0,res)$ be two models of $Th(K)^{res}$ 
with $(L,L_0,res)$ is $\abs{M}^+$-saturated. Let $(S,S_0,res)$ be a 
substructure of $(M, M_0, res)$ and $i:(S,S_0,res)\to (L,L_0,res)$ be an 
$\mathcal L_{\mbox{\tiny Zar}}^{\pi,\infty}(P)$-embedding. Then there 
is an extension $j:(F^{\mbox{\tiny alg}}, F^{\mbox{\tiny alg}}_0, res) \to 
(L,L_0,res)$ of $i$ where $F$ is the field generated by $S$.
\end{Lemma}

\begin{Lemma}\label{lem:QE-complete}
The theory $Th(K)^{res}$ admits quantifier elimination in the language $\mathcal 
L_{\mbox{\tiny Zar}}^{\pi, \infty}(P)$. Moreover the theory $Th(K)^{res}$ is complete.  
\end{Lemma}
\begin{proof}
Let $\mathcal M = (M,M_0,res)$ and $\mathcal N = (N,N_0,res)$ be two 
models of $Th(K)^{res}$ such that $(N,N_0,res)$ is $\abs M^+$-saturated. 
Let $(S,S_0,res)\subseteq \mathcal M$ be a proper substructure and suppose 
$i:\mathcal (S,S_0,res)\to \mathcal N$ is an embedding. We will show that 
we can extend $i$ to an embedding $j:(S^\prime,S^\prime_0,res)\to\mathcal N $ 
of some substructure $S^\prime\subseteq M$ properly containing $S$. 

By the above arguments it is enough to  consider the case where $S$ 
is a proper algebraically closed subfield of $M$, and $i: (S,S_0,res)\to 
(N,N_0,res)$ is an embedding.

\begin{description}[leftmargin=0cm]
 \item[Case 1] $S_0 \neq M_0$. Let $x\in M_0\setminus S_0$. Then 
 $res(x)  = x$ is transcendental over $S_0$ as $S_0$ is algebraically 
 closed. Moreover  $x\not\in S$, and hence $x$ is transcendental over 
 $S$. Since $(N,N_0,res)$  is saturated there is a $y\in N_0$ with $res(y) 
 = y$ and $res(y)\not\in i(S_0)$.
 
Therefore $res(y)$ is transcendental over $i(S_0)$ and $y$ is transcendental 
over $i(S)$. The isomorphism $i:S\to i(S)$ extends to a field isomorphism 
$j:S(x)\to i(S)(y)$, where $j(x) = y$. Moreover, by Fact~\ref{fct:3.22}, $j:(S(x),
\mathcal O_{S(x)})\to (i(S)(y),\mathcal O_{i(S)(y)})$ is a valued field isomorphism 
and $j(res(x)) = j(x) = y = res(y) = res(j(x))$.

Also, observe that $j$ maps $S_0(x)$ to $i(S_0)(y)$. Therefore it preserves 
the predicate $P$.

Next, for any $b\in S(x)$, we will show that $j(res(b)) = res(j(b))$. First 
assume that $b\in\mathcal O_{S(x)}$. If $b\in\mathcal M_{S(x)}$, then 
$res(b) = 0$, and also $j(b)\in\mathcal M_{i(S)(y)}$. Therefore $j(res(b))= 
j(0) = 0 = res(j(b))$. Now assume that $b\in\mathcal O_{S(x)}\setminus\mathcal M_{S(x)}$, 
i.e. $v(b)=0$. Then $b=\frac{f(x)}{g(x)}=\frac{f_0+f_1x+\ldots+ f_nx^n}{g_0+g_1x+\ldots+ g_mx^m}$ 
for some $f(x),g(x)\in S[x]\setminus\{0\}$ with $v(f)=v(g)$. Furthermore 
we may assume that $v(f) = v(g) =0$. Indeed, if this is not the case, divide 
all coefficients of $f$ and $g$ by a coefficient of $g$ which has minimal 
valuation. Then, in particular, all coefficients of $f(x)$ and $g(x)$ are in $
\mathcal O_{S(x)}$.

Then  
\begin{eqnarray*}
&j\left(res\left(\frac{f}{g}\right)\right)& = j(resf(x)(g(x))^{-1})) = \\
&=& j(res((f_0+f_1x+\ldots+ f_nx^n)(g_0+g_1x+\ldots+ g_mx^m)^{-1}))\\
&=&j(res(f_0+f_1x+\ldots+ f_nx^n)(res(g_0+g_1x+\ldots+ g_mx^m))^{-1})\\
&=&j((\bar{f_0}+\bar{f_1}x+\ldots+ \bar{f_n}x^n)(\bar{g_0}+\bar{g_1}x+\ldots+ \bar{g_m}x^m)^{-1}))\\
&=&j((\bar{f_0}+\bar{f_1}x+\ldots+ \bar{f_n}x^n))j(\bar{g_0}+\bar{g_1}x+\ldots+ \bar{g_m}x^m)^{-1}))\\
&=&(i(\bar{f_0})+i(\bar{f_1})y+\ldots+ i(\bar{f_n})y^n)(i(\bar{g_0})+i(\bar{g_1})y+\ldots+ i(\bar{g_m})y^m))^{-1}\\
&=&(\bar{i(f_0)+i(f_1)y+\ldots+ i(f_n))y^n)(i(g_0)+i(g_1)y+\ldots+ i(g_m)y^m)^{-1}})\\
&=&res\left(j\left(\frac{f_0+f_1x+\ldots+ f_nx^n}{g_0+g_1x+\ldots+ g_mx^m}\right)\right)
\end{eqnarray*}

Now assume $b\in S(x)\setminus\mathcal O_{S(x)}$. Then $res(b)=
\infty$ and $j(res(b)) = j(\infty) = \infty$. On the other hand $j(b)\in 
i(S)(y)\setminus\mathcal O_{i(S)(y)}$. Therefore $res(j(b))=\infty$. 
Hence $j(res(b)) = j(\infty) 
= \infty = res(j(b))$.
 
 \item[Case 2] $S\cap M_0 = M_0$ and $vS^\times \neq vM^\times$. 
 Let $g\in vM^\times\setminus vS^\times$. Since $(N,N_0,res)$ is 
 $\abs{M}^+$-saturated, $vN^\times$ is $\abs{M}^+$-saturated as 
 an ordered abelian group, moreover $vN^\times$ is non-trivial. Also, 
 $vi(S^\times)$ and $vS^\times$ are divisible groups. Therefore, there 
 is $h\in vN^\times\setminus viS^\times$ such that for all $s\in S$, 
 $g<v(s)$ if and only if $h<v(i(s))$.
 
Hence the isomorphism $vS^\times\to vi(S^\times)$ extends to the 
following  ordered abelian group isomorphism
 
 \begin{eqnarray*}
 vS^\times +g\mathbb Z&\to & vi(S^\times) + h\mathbb Z \\
 v(s) + gn &\mapsto & v(i(s)) + hn
 \end{eqnarray*}
 
Now pick elements $x\in M^\times$ and $y\in L^\times$ with 
valuations $g$ and $h$ respectively (i.e. $v(x)=g$ and $v(y) = 
h$). Clearly, $x\not\in S$ and $y\not\in i(S)$. Therefore, $x$ and 
$y$ are transcendental over $S$ and $i(S)$ respectively. 

Let $S^\prime := S(x)$, $R:= i(S)$ and $R^\prime:= i(S)(y)$. We can 
extend $i$ to a field isomorphism $j:S^\prime\to R^\prime$. Moreover, 
by Fact~\ref{fct:2.23} $j$ is a valued field embedding. Therefore $j$ 
maps $\mathcal O_{S^\prime}$ isomorphically to $\mathcal O_{R^
\prime}$, and $\mathcal M_{S^\prime}$ to $\mathcal M_{R^\prime}$. 
Hence we can extend $j$ naturally to $j:\mathcal O_{S^\prime}/{\mathcal 
M_{S^\prime}} \to\mathcal O_{R^\prime}/{\mathcal M_{R^\prime}}$.

Let $S^\prime_0 := P(S^\prime)= M_0\cap S^\prime = M_0$ and $R^
\prime_0 := P(R^\prime)= N_0\cap S^\prime\subset N_0$. Then $j$ 
maps $P(S^\prime)$ to $P(R^\prime)$ since it extends $i$, and $i$ 
maps $M_0$ into $N_0$. Hence $j$ preserves the predicate $P$.

It follows that $j$ preserves the map $res$ from a similar argument 
to Case 1 above. Therefore $j$ is also an $\mathcal L_{\mbox{\tiny Zar}}
^{\pi}(P)$-embedding. 
 

\item[Case 3] $S\cap M_0 = M_0$ and $vS^\times = vM^\times$. 
We may assume $vS^\times = vi(S^\times)$ by identifying $(S,S\cap 
M_0,res)$ with its image under $i$. Let $x\in M\setminus S$. Let $f(x)
\in S[x]$ be a non-zero element. Then $f(x) = u\displaystyle\prod_i 
(x-s_i)$ for some $u, s_i\in S$. Therefore $v(f) = v(u) +\displaystyle
\sum_i v((x-s_i))$. Hence the valuation $v_{|_{S(x)}}:S(x)\to vS$ is determined 
by $v_{{|_{S}}}:S\to vS$, and $S\to vS^\times$ given by $s\mapsto 
v(x-s)$.
 
By Fact~\ref{ftc:3.30} and saturation there exists an element $y\in L
\setminus i(S)$ such that $v(y-s) = v(x-s)$ for all  $s\in S$. Next, extend 
the isomorphism $i$ to a field isomorphism $j:S(x)\to i(S)(y)$ by $j(x)
=y$. Then $j$ is a valued field embedding. 

As in Case 2, $P(S(x))= M_0\cap S(x) = M_0$ and $P(i(S)(y))= N_0\cap 
i(S)(y)\subset N_0$. Then $j$ maps $P(S(x))$ to $P(i(S)(y))$ since it 
extends $i$, and $i$ maps $M_0$ into $N_0$. Hence $j$ preserves the 
predicate $P$.  

Again, it follows that $j$ preserves the map $res$ from an argument 
similar to the one used in Case 1 above. Therefore $j$ is also an 
$\mathcal L_{\mbox{\tiny Zar}}^{\pi}(P)$-embedding. 
\end{description} 
  
This establishes quantifier elimination for $Th(K)^{res}$. 
For completeness, consider $(k,k,id)$ where $id:k\cup\{\infty\}\to k
\cup\{\infty\}$ is identity. Take an element $t$ which is transcendental 
over $k$ and adjoin it to $k$. This is a prime substructure of the theory 
$Th(K)^{res}$ (i.e. it embeds into any model of $Th(K)^{res}$). Hence 
this theory is complete, as a theory with quantifier elimination and a prime 
structure is complete.
\end{proof}

\begin{Lemma}\label{lem:saturated-universal}
Let $(N,N_0,res)$ be a $\kappa$-saturated model of $Th(K)^{res}$ (in 
the language $\mathcal L_{\mbox{\tiny Zar}}^{\pi,\infty}(P)$), then $res:N\to 
N_0$ is a $\kappa$-universal specialization.
\end{Lemma}
\begin{proof}
Let $N^\prime\succeq N$ as $\mathcal L{\mbox{\tiny Zar}}$ structures, 
and $A\subseteq N^\prime$ with $\abs{A}<\kappa$. Let $res^\prime:A
\cup N\to N_0$ be a specialisation extending $res$. We may assume that 
$res^\prime(a)\in A$ for each $a\in A$ (if not, we can add those elements 
from $N_0$ to $A$, the resulting set will still have cardinality less than 
$\kappa$).

We need to show that there is an elementary embedding $\sigma: A\to N$ 
over $A\cap N$ such that $res^\prime_{|_{A}} =res\circ\sigma$.

Enumerate $A\setminus N$ as $\{b_i: i<\lambda\}$. Let $A_0=A\cap N$, 
and $\sigma_0:=id= A\cap N\to N$, and let $A_j=A_0\cup\{b_i:j<i\}$. By 
induction we will build a chain of partial elementary embeddings $\sigma_i:
A_i\to L$ such that $\sigma_i\subseteq\sigma_j$ for $i<j$ and $res^\prime_{|_{A_i}} 
=res\circ\sigma_i$. 

Let $i=j+1$ and $\sigma_j:A_j\to N$ be a partial elementary embedding. 
Consider the $\mathcal L_{\mbox{\tiny Zar}}^{\pi,\infty}(P)$ type 
\[p(x)=\{\varphi(x,\sigma_j(\bar a)): N^\prime\models \varphi(b_{j+1},\bar a), 
\bar a \in A_j\}\]

Since $\abs{A_j}<\kappa$, by $\kappa$-saturation of $(N,N_0,res)$, the 
type  $p(x)$ can be realised. is realized by some $c_i\in N$. Then define 
$\sigma_i(b_i) =c_i$. To see that $res^\prime(b_i) = res(\sigma_i(b_i))$, 
observe that $res^\prime(b_i)\in A_0$. Then $res^\prime(b_i)=\beta$ 
for some $\beta\in A_0$. Then the formula $res(x)=\beta$ is in $p(x)$. 
Hence $res(\sigma_i(b_i)) = res(c_i) = \beta_i$. Since $\sigma_j$ is a 
partial elementary embedding, it follows that $\sigma_i$ is partial elementary.  

If $i$ is a limit ordinal, let $\sigma_i:=\displaystyle\bigcup_{j<i}\sigma_j$. 
Then define $\sigma:=\displaystyle\bigcup_{j<\lambda}\sigma_j:A\to N$. 
Clearly, $\sigma$ is an $\mathcal L{\mbox{\tiny Zar}}$ partial elementary 
map, and moreover $res^\prime|_{A}=res\circ\sigma$.
\end{proof}

The following theorem follows from the completeness of $Th(K)^{res}$ and 
Lemma~\ref{lem:saturated-universal}.

\begin{Theorem}\label{thm:res-el-equiv}
Let $(M,M_0,res)$ be a model of $Th(K)^{res}$. Then $(M,M_0,res)$ is 
elementarily equivalent to a model $(N,N_0,res)$ such that $res:N\to N_0$ 
is $\kappa$-universal. 
\end{Theorem}
\begin{proof}
Let $(N,N_0,res)$ be a $\kappa$-saturated model of $Th(K)^{res}$, then 
by Lemma~\ref{lem:saturated-universal} the specialization $res:N\to N_0$ 
is $\kappa$-universal. Since the theory $Th(K)^{res}$ is complete, $(M,M_0,
res)\equiv (N,N_0,res)$.
\end{proof}

\begin{Remark}
The symbol $\infty$ in the language $\mathcal L_{\mbox{\tiny Zar}}^{\pi,
\infty}(P)$ is for convenience. Especially to consider $res$ as a map defined 
on the whole structure; in terms of valuation theory, to consider it as a place 
in the proper way.

However, we can consider the language $\mathcal L_{\mbox{\tiny Zar}}^
{\pi}(P)$ without the $\infty$ symbol, and consider $res$ as a partial map. 
The theory $Th(K)^{res}$ with the exception of axiom 2 can be stated in 
$\mathcal L_{\mbox{\tiny Zar}}^{\pi}(P)$. This axiom only specifies how 
$\infty$ will be treated. Moreover, all the arguments in the proofs of 
Lemmas~\ref{lem:QE-complete} and~\ref{lem:saturated-universal} 
are valid for $\mathcal L_{\mbox{\tiny Zar}}^{\pi}(P)$ (with keeping in mind 
that $res$ is a partial map in this setting, and being cautious accordingly). 
Theorem~\ref{thm:res-el-equiv} will follow in the same way.
\end{Remark}

In Theorem~\ref{thm:ext-res} I have shown that any specialization of an algebraically closed field can be 
extended to have a valuation ring as its domain. In other words, it can be extended to a place. It is easy 
to see that valuation rings are maximal domains for specializations of algebraically closed fields. In other 
words, for algebraically closed fields maximally defined specializations are places. On the other hand places 
are naturally identified with the equivalence classes of valuations. Therefore maximally defined specializations 
$K\to k$ are in one to one correspondence with the equivalence classes of valuations $v:K\to\Gamma$ with the 
residue field $k$.

In particular if we consider a specialization $\pi:\mathbb P^1(K)\to\mathbb P^1(k)$, then $\pi$ can be extended to 
total specialization. Which will again be a place $res:\mathbb P^1(K)\to\mathbb P^1(k)$ such that $res^{-1}(k)$ is a 
valuation ring $R$ and $res^{-1}(\infty) = \mathbb P^1(K)\setminus R$. In this case all total specializations 
$\mathbb P^1(K)\to\mathbb P^1(k)$ are in a one to one correspondence with the equivalence classes of valuations 
$v:K\to\Gamma$ with residue field $k$ as above. A longer, but a more geometric and constructive proof of this fact 
is given by de Piro \cite{de Piro}.

\begin{Remark}
 For complete Zariski structures any specialization can be extended to a total specialization. Hence the notion 
 of maximally defined specialization coincides with total specialization in this case. 
\end{Remark}

\section{Specialisations over Algebraic Varities}\label{sct:varieties}

In this section we will consider the universality of specializations of 
algebraic (quasi-projective) varieties.

In this section $G$ will denote an algebraic (quasi-projective) variety 
defined over a field $k$ which is a subfield of an algebraically closed 
field $K$. $G(K)$ -$K$ points of $G$- can be thought as a Zariski structure 
in a natural way in the language $\mathcal L_{\mbox{\tiny Zar}}$, which 
only has predicates for $k$-definable subvarieties of $G(K)^n$ (all 
$n\in\mathbb N$).

It is a well known fact that $G(K)$ is bi-interpretable with $K$ where 
$K$ has the structure of an algebraically closed field together with 
named elements from $k$. This bi-interpretation induces an isomorphism 
between $Aut(K)$ and $Aut(G)$ \cite[Chap. 3, Prop. 1.4. part (ii)]{Pillay}\footnote{
Note that in the proposition referenced here it is actually proved that 
two countable, $\omega$-categorical structures are bi-interpretable 
if and only if they have homeomorphic automorphism groups. However 
the direction we required above is true without assuming countability 
and $\omega$-categoricity, as it can be seen from the proof.}, hence 
we may think that $Aut(K)$ acts on $G$ in the same way $Aut(G)$ does.

Let $A\subseteq G(K)$. Since $G(K)$ is $k$-definably covered by (quasi) 
affine varieties we can consider the subfield generated by $A$ and $k$, 
which, we will denote by $k(A)$. To be more precise $k(A) = k(\bar a : 
[\bar a]\in A\cap U_i )$ where $[\bar a]$ is the homogeneous coordinates 
of a point and $U_i = \{[\bar a] : a_i = 1\}$.

\begin{Lemma}\label{lem:acl-varities}
Let $A\subseteq G(K)$, then $acl_G(A) = G(k(A)^{\mbox{\tiny alg}})$.
\end{Lemma}
\begin{proof}
$g\in acl_G(A)\iff$ $g$ has finitely many conjugates under $Aut(G/A)\iff g$ has finitely 
many conjugates under $Aut(K/A) \iff g\in G(k(A)^{\mbox{\tiny alg}})$. 
\end{proof}

\subsection{Specialisation Induced by the Residue Map}\label{ssect:Res}
Let $res: K\to k$ be a specialization as in Section~\ref{sct:acvf}, and let $G$ 
be an algebraic variety defined over $k$. Then, $res$ gives rise to a specialization 
$Res: G(K)\to G(k)$. In the case $G=\mathbb P^1$ we have already seen this 
specialization in Example~\ref{ex:spcl-projective-line}. Here I will recall this 
example in more detail and generalize this construction to quasi-projective varieties. 
In particular, the argument below will explicitly show that the induced specialisation 
on $\mathbb P^n$ is total. 

Recall that $\mathbb P^1 = U_0\cup H_0$ where $U_0=\{(x_0:x_1): x_0 =1\}$ 
and $H_0=\{(x_0:x_1): x_0 = 0\}$. Then $H_0$ consists of a single point, 
denoted by $\infty$, and $U_0$ is isomorphic to $\mathbb A^1$ via $(1:x_1)
\mapsto x_1$. 

Therefore we can write $\mathbb P^1(K)=\mathbb A^1(K)\cup\{\infty\} = K\cup
\{\infty\}$ and similarly $\mathbb P^1(k) = k\cup\{\infty\}$. Then one can define 
$Res:\mathbb P^1(K)\to \mathbb P^1(k)$ for $(1:x_1)\in U_0(K)$ as $Res(1:x_1)
= (1:res(x_1))$ given $x_1\in\Dom{res}$. One can extend $Res$ to the whole of 
$\mathbb P^1(K)$: Define $Res(\infty) = \infty$, and for all $(1:x_1)\in U_1$ with 
$x_1\not\in\Dom{res}$, define $Res(1:x_1) = \infty$.


Next we will consider $\mathbb P^n$. Recall that $\mathbb P^n = U_0\cup
\ldots\cup U_n$ where $U_i=\{(x_0:\ldots:x_n): x_i =1\}$ for $i=0,\ldots, n$. Here, 
$U_i$ is isomorphic to $\mathbb A^n$ via $(x_0:\ldots:x_n)\mapsto (x_0,\ldots,x_
{i-1},x_{i+1}, \ldots, x_n)$.

Therefore define $Res:\mathbb P^n(K)\to \mathbb P^n(k)$ as follows: For $(x_0:\ldots
: x_n)\in U_i(K)$ define $Res(x_0:\ldots: x_n) = (res(x_0):\ldots:res(x_{i-1}):1:res(x_{i+1}):
\ldots: res(x_n))$.

\begin{Remark}
For any $(x_0:\ldots:x_n)\in\mathbb P^n(K)$ there is a $\lambda\in K$ such that $\lambda 
x_0,\ldots, \lambda x_n$ are in the valuation ring $\Dom{res}$ but not all of them are in the 
valuation ideal, i.e. there is some $j$ such that $res(\lambda x_j) \neq 0$. Hence $(\lambda x
_j)^{-1}\in\Dom{res}$. Then $(x_0:\ldots:x_n) = (\lambda x_0:\ldots:\lambda x_n) = (\frac{
\lambda x_0}{\lambda x_j}:\ldots:\frac{\lambda x_{j-1}}{\lambda x_j}:1:\frac{\lambda x_{j-1}}
{\lambda x_j}:\ldots:\frac{\lambda x_n}{\lambda x_j}) =(\frac{x_0}{ x_j}:\ldots:\frac{ x_{j-1}}
{ x_j}:1:\frac{ x_{j-1}}{ x_j}:\ldots:\frac{x_n}{x_j})$ and $\frac{x_0}{ x_j},\ldots ,\frac{ x_{j-1}}
{ x_j},\frac{ x_{j-1}}{ x_j},\ldots,\frac{x_n}{x_j}\in\Dom{res}$. 

Therefore, any point $x\in\mathbb P^n(K)$ can be written in homogeneous coordinates as 
$x = (x_0:\ldots: x_{j-1}:1:x_{j-1}:\ldots:x_n)\in U_j$ for some $j$ with $x_0,\ldots,x_{j-1},1,x_
{j-1},\ldots,x_n\in\Dom{res}$. It immediately follows that $Res$ is defined on every point of 
$\mathbb P^n(k)$, i.e. $\Dom{Res} = \mathbb P^n(K)$.
\end{Remark}


Next, as we define $Res$ separately on each of the sets $U_i$, we need to check that it is well 
defined on the intersection of such sets. This will be immediate from the following remark: 

%
%

\begin{Remark} 
One can alternatively define $Res:\mathbb P^n(K)\to \mathbb P^n(k)$ as follows.
Let $(x_0:\ldots, x_n)\in \mathbb P^n(K)$. There is a $\lambda\in K$ such that $\lambda x_0, 
\ldots, \lambda x_n\in\Dom{res}$ such that $res(\lambda x_i)\neq 0$ for some $i=1,\ldots,n$. 
Then define $Res(x_0:\ldots : x_n) = (res(\lambda x_0):\ldots:res(\lambda x_n))$. 

The definition of $Res$ as given above does not depend on $\lambda$. For if $\mu\in K$ is an 
other element such that $\mu x_0,\ldots,\mu x_n\in\Dom{res}$ with $res(\mu x_j) \neq 0$, 
it follows that $(res(\lambda x_0):\ldots:res(\lambda x_n)) = (res(\mu x_0):\ldots:res(\mu x_n))$.  
\end{Remark}

Clearly, $Res$ defined in the remark above coincides with the map $Res$ defined separately on 
each $U_i$. Moreover, the map $Res$, is well defined on the intersections $U_i\cap U_j$ 
for all $i,j$ by the same remark.

Next, we need to check that $Res:\mathbb P^n(K)\to\mathbb P^n(k)$ is a specialization. 
Since $res:k\to k$ is identity, $Res:\mathbb P^n(k)\to\mathbb P^n(k)$ is also identity. 
Let, $S\subseteq (\mathbb P^n(K))^m$ be a closed subset and let $\bar x\in\Dom{Res}
\cap S$. Write $\bar x = (z_1,\ldots, z_m)$ where $z_i = (z_{i0}:\ldots:z_{in})$ for all $i=1
\ldots, m$. There is a finite set of homogeneous polynomials 
\[\{f_j(Z_{01},\ldots,Z_{0n},
\ldots, Z_{m0},\ldots, Z_{mn})\}\subseteq k[Z_{01},\ldots,Z_{0n},\ldots, Z_{m0}, \ldots, 
Z_{mn}]\] 
such that $f_i(z_{01},\ldots,z_{0n},\ldots, z_{m0},\ldots, z_{mn}) = 0$ for all 
$i=1,\ldots,m$.
  
Since $res:K\to k$ is a specialization, $f_i(res(\bar z)) = 0$ for all $i$, where $\bar z = 
(z_{01},\ldots,z_{0n},\ldots, z_{m0}, \ldots, z_{mn})$. Therefore, 
$Res(\bar x)\in S$.
For a quasi-projective variety $G\subseteq \mathbb P^n(k)$, the restriction of $Res:
\mathbb{P}^n(K)\to\mathbb{P}^n(k)$ gives a specialisation.

\subsection{Specialisation Induced on the Field from a Variety}
 
In this subsection I will show that a maximal specialization on $G$ comes from 
a maximal specialization $\pi_K:K\to k$. The first main statement of this subsection 
is the following:

{
\renewcommand{\theTheorem}{\ref{prop:characterization-spcl-affine-varieties}}
\begin{Proposition}
Let $G$ be an affine variety defined over $k$. Let $\pi_G:G(K)\to G(k)$ be a 
maximal specialization. Then there is a maximal specialization $\pi_K:K\to k$ 
such that $\Dom{\pi_G} = G(K)\cap (\Dom{\pi_K})^n$ and for any $\bar x\in 
\Dom{\pi_G}$, its image $\pi_G(\bar x) = (x_1^{\pi_K},\ldots,x_n^{\pi_K})$.
\end{Proposition}
\addtocounter{Theorem}{-1}
}

To prove Proposition~\ref{prop:characterization-spcl-affine-varieties}, one first 
needs some preparation. Let $k$ be and algebraically closed field (seen as a 
Zariski structure) and $K\succeq k$. Let $G$ an affine variety over $k$ and 
$\pi_G:G(K)\to G(k)$ be a maximal specialization.We can interpret $k$ in $G(k)$ 
(and similarly $K$ in $G(K)$) via the projections  $\pr_i:G(k)\to k$ for $i=1,
\ldots,n$. Clearly $\pr_i(G(k))=k\setminus X_i$ for some finite $X_i\subseteq k$. 
Moreover, the equivalence relation $\sim_i$ on $G$ for each $i$, defined 
by $(x_1,x_2,\ldots, x_n)\sim_i (x_1^\prime,x_2^\prime,\ldots,x_n^\prime)$ if 
and only if $\pr_i(x_1,x_2,\ldots, x_n) = \pr_i(x_1^\prime,x_2^\prime,\ldots,
x_n^\prime)$ is closed. 

For each $i=1,\ldots,n$, we define  $\pi_i:K\setminus X_i\to k\setminus X_i$ 
as follows: first, define $\Dom{\pi_i}:=\pr_i(\Dom{\pi_G})$. Second, For any 
$x\in\pr_i(\Dom{\pi_G})$ define $\pi_i(x)$ as follows: Pick an element $(x_1,
x_2,\ldots,x_n)\in\pr_i^{-1}(x)\cap \Dom{\pi_G}$. Then define $\pi_i(x)=\pr_i(\pi_G
(x_1,x_2,\ldots,x_n))$. It is clear that $\pi_i$ is well defined for each $i$, since 
$\sim_i$ is a closed equivalence relation. Next we will show that $\pi_i$ is a specialization.

Note that, $K\setminus X_i$ and $k\setminus X_i$ are quasi-affine varieties, 
and we will consider them with the natural Zariski topologies on their Cartesian 
powers as Zariski geometries. 

\[\xymatrix{ G(K) \ar@{->}[r]^{\pi_G}\ar@{->}[d]^{\pr_i}  &G(k)\ar@{->}[d]^{\pr_i}\\
K_i:= K\setminus X_i \ar@{->}[r]^{\pi_i} & k_i := k\setminus X_i}\]

\begin{Proposition}\label{prop:component-spcl}
For each $i\in\{1,\ldots, n\}$, $\pi_i:K_i\to k_i$ is a specialization.
\end{Proposition}
\begin{proof}
Let $S$ be a closed set of $K_i:=K\setminus X_{i0}$ and $\bar a\in S\cap
\Dom{\pi_i}$. We want to show that $\pi_i(\bar a)\in S$. Since $\pr_i$ 
is continuous, $T:= \pr_i^{-1}(S)$ is closed, and since $\bar a\in\Dom{\pi_i}$, 
there is $\bar\alpha\in\Dom{\pi_G}$ such that $\pr_i(\bar\alpha) = \bar a$. 
Then $G(K_i)\models T(\alpha)$, and since $\pi_G$ is a specialization, $G(k_i)
\models T(\pi_G(\bar \alpha))$.

Then, by definition of $\pr_i$, we have $\pr_i(\pi_G(\alpha))\in\pr_i(T) = S$. 
Therefore, by definition, $\pi_i(\bar a)\in S$. Hence $\pi_i$ is a specialization.
\end{proof}

Therefore we can think of $\pi_G:G(K)\to G(k)$ in terms of the specializations 
$\pi_1,\ldots, \pi_n$. Let $\bar x\in\Dom{\pi_G}$ and say $\pi_G(\bar x) = 
\pi_G(x_1,\ldots,x_n) = (y_1,\ldots,y_n)$. By construction $\pi_1(x_1) = 
\pr_1(\pi_G(\bar x)) = y_1, \ldots, \pi_n(x_n) = \pr_1(\pi_G(\bar x)) = y_n$. 
Then we can write $\pi_G(x_1,\ldots,x_n) = (x_1^{\pi_1},\ldots, x_n^{\pi_n})$.

\begin{Proposition}\label{prop:intrsction-domains}
For any $i,j \in \{1,\ldots, n\}$, the specializations $\pi_i$ and $\pi_j$ are the same 
on the intersection $\Dom{\pi_i}\cap\Dom{\pi_j}$ of their domains; i.e $\pi_i = 
\pi_j$ on $\Dom{\pi_i}\cap\Dom{\pi_j}$.
\end{Proposition}
\begin{proof}
First of all observe that since $\Dom{\pi_i}\cap\Dom{\pi_j} \supseteq 
k\setminus X_i\cup X_j$, it is not empty.

Let $z\in\Dom{\pi_i}\cap\Dom{\pi_j}$. Then there are tuples $\bar x = (x_1,
\ldots, x_i,\ldots,x_n)\in\Dom{\pi_G}$ and $\bar y =(y_1,\ldots, y_j,\ldots,
y_n)\in\Dom{\pi_G}$ such that $x_i = y_j = z$. 

Define 
\[C_{ij} = \{(\bar x, \bar y)\in (G(K))^2 : x_i=y_j\}\]
Then $\bar x,\bar y\in C\cap\Dom{\pi_G}$. Clearly $C$ is a closed subset of 
$(G(K))^2$. Then, since $\pi_G$ is a specialization, $\pi_G(\bar x), \pi_G(\bar y)
\in C$. Then $\pi_i(x_i) = \pi_j(y_j)$. Hence $\pi_i(z) = \pi_j(z)$. 
\end{proof}

Define $D := \Dom{\pi_1}\cup\ldots\cup\Dom{\pi_n}$. Next we will define a 
common extension of the specializations $\pi_1,\ldots,\pi_n$ to $D$. Define 
$\pi_K^0:D\to k\setminus X$ where $X=X_1\cap\ldots\cap X_n$ as follows: 
let $x\in D$, then by definition there is an $i = 1,\ldots, n$ such that $x\in
\Dom{\pi_i}$. Define $\pi_K^0(x) = \pi_i(x)$. Since $\pi_1,\ldots, \pi_n$ are 
the same on the intersection $\Dom{\pi_1}\cap\ldots\cap\Dom{\pi_n}$, this 
is well defined. We now need to show that $\pi_K^0$ is a specialization.

\begin{Proposition}\label{prop:common-ext}
$\pi_K^0:D\to k\setminus X$ is a specialization.
\end{Proposition}
\begin{proof}
Let $S$ be a closed set of $K\setminus X$ and let $\bar z = (z_1,\ldots,z_n)
\in D^n$ be such that $\models S(z_1,\ldots,z_n)$. We will show that $\models 
S(z_1^{\pi_K^0},\ldots,z_n^{\pi_K^0})$.

For each $i=1,\ldots,n$ there is a $j_i\in\{1,\ldots,n\}$ such that $z_i\in\Dom
{\pi_{j_i}}$, since $\bar z\in D^n$. Then for all $i$ there is a tuple $\bar x_i = 
(x_{1i},\ldots,x_{ji},\ldots,x_{ni})\in\Dom{\pi_G}$ such that $z_i = x_{ji}$.

Next, look at the set 
\[C=\{(\bar x_1,\ldots,\bar x_n)\in G(K)^n: S(x_{j1},\ldots,x_{jn})\}\]
Then $C$ is a closed subset of $G(K)^n$. Therefore, since $\pi_G$ is a specialization, 
$\models C(\bar x_1^{\pi_G},\ldots,\bar x_n^{\pi_G})$. Recall that $\bar x_i^
{\pi_G} = (x_{1i}^{\pi_1},\ldots, x_{ji}^{\pi_{ji}},\ldots, x_{ni}^{\pi_n})$. Then in 
particular, $S(x_{j1}^{\pi_{j1}},\ldots,x_{jn}^{\pi_{jn}})$. But, $S(x_{j1}^{\pi_{j1}},
\ldots,x_{jn}^{\pi_{jn}}) = S(x_{j1}^{\pi_K^0},\ldots,x_{jn}^{\pi_K^0})$, since $\pi_
K^0$ is a common extension of all $\pi_1,\ldots,\pi_n$. Then $\models S(z_1^{\pi_
K^0},\ldots,z_n^{\pi_K^0})$ as $z_i = x_{ji}$. Hence $\pi_K^0$ is a specialization.
\end{proof}

The specialization $\pi_K^0:D\to k\setminus X$ is not defined on the finite 
subset $X\subset k$. However, as $X\subset k$ and since any specialization 
$K\to k$ must be identity on $k$ we can canonically extend $\pi_K^0$ to 
a specialization $\pi_K^0:K\to k$ simply by defining it to be identity on $X$. 

From now on, whenever we write $\pi_K^0:K\to k$, it will always refer to 
the specialization of $K$ with domain $D\cup X$, and range $k$ (so that 
$\pi_K^0:D\cup X\to k$ is onto). Further, the specialization $\pi_K^0:K
\to k$ induces a specialization $\pi_G^0:G(K)\to G(k)$ in the natural way:
Define $\Dom{\pi_G^0} := G(K)\cap(\Dom{\pi_K^0})^n$, and for all $\bar x = 
(x_1,\ldots,x_n)\in\Dom{\pi_G^0}$, define $\pi_G^0(\bar x) := (x_1^{\pi_K^0},
\ldots, x_n^{\pi_K^0})$.   

\begin{Proposition}\label{prop:spcl-ind-by-extension}
The specialization $\pi_G^0:G(K)\to G(k)$ induced by $\pi_K^0:K\to k$ 
is an extension of the specialization $\pi_G:G(K)\to G(k)$, and $\pi_G^0 
= \pi_G$ on $\Dom{\pi_G}$.
\end{Proposition}
\begin{proof}
Let $\pi_G^0:G(K)\to G(k)$ be the specialization induced by $\pi_K^0$. 
Then by construction, $\Dom{\pi_G^0} = G(K)\cap(\Dom{\pi_K^0})^n$, 
and for all $\bar x = (x_1,\ldots,x_n)\in\Dom{\pi_G^0}$, $\pi_G^0(\bar x) 
= (x_1^{\pi_K^0},\ldots, x_n^{\pi_K^0})$.

Next, we will show that $\Dom{\pi_G}\subseteq\Dom{\pi_G^0}$. Let 
$\bar x = (x_1,\ldots,x_n)\in\Dom{\pi_G}$. Then $x_i\in\Dom{\pi_i}$ for 
all $i = 1,\ldots,n$. Then $x_1,\ldots,x_n\in\Dom{\pi_K^0}$, since $\pi_K^0$ 
is a common extension of all of the specializations $\pi_1,\ldots,\pi_n$. Hence 
$(x_1,\ldots,x_n)\in\Dom{\pi_G^0}$.

Last, we will show that $\pi_G^0 = \pi_G$ on $\Dom{\pi_G}$. Let $\bar x 
= (x_1,\ldots,x_n)\in\Dom{\pi_G}$. Then $x_i\in\Dom{\pi_i}$ for all $i = 
1,\ldots,n$ as before. Hence
\[\pi_G^0(x_1,\ldots,x_n) = 
(x_1^{\pi_K^0},\ldots,x_n^{\pi_K^0}) = (x_1^{\pi_1},\ldots,x_n^{\pi_n}) 
=\pi_G(x_1,\ldots,x_n)\]

Which concludes that $\pi_G^0$ is an extension of $\pi_G$.  
\end{proof}

Note here that if $\pi_G:G(K)\to G(k)$ is a maximal specialization we will have 
$\Dom{\pi_G} = \Dom{\pi_G^0}$ and $\pi_G^0 = \pi_G$. 

%

At this point we are ready to prove Proposition~\ref{prop:characterization-spcl-affine-varieties}. 
However, before proceeding to the proof of Proposition~\ref{prop:characterization-spcl-affine-varieties}, 
we will prove another result. 

\begin{Proposition}\label{prop:affine-univ-1}
Let $\pi_G:G(K)\to G(k)$ be a maximal specialisation. If the specialization 
$\pi_K^0:K\to k$ induced by $\pi_G$ (as defined before Proposition~\ref{prop:common-ext}) 
is $\kappa$-universal, then $\pi_G$ is $\kappa$-universal. 
\end{Proposition}
\begin{proof}
Suppose that $\pi_K^0$ is $\kappa$-universal. We will show that $\pi_G$ is 
also $\kappa$-universal. Let $G(K^\prime)\succeq G(K)$, and $A\subseteq 
G(K^\prime)$ with $\abs{A}<\kappa$. Let $\pi_{G^\prime}:G(K)\cup A\to G(k)$ 
be a specialisation extending $\pi_G$. 

By Propositions~\ref{prop:component-spcl}, \ref{prop:intrsction-domains}, 
\ref{prop:common-ext}, $\pi_{G^\prime}$ induces a specialisation $\pi_{K^\prime}^0:
K\to k$ such that $\pi_{G^\prime}(x_1,\ldots,x_n) = (x_1^{\pi_{K^\prime}^0},
\ldots, x_n^{\pi_{K^\prime}^0})$. 

Next, we claim that $\pi_{K^\prime}^0$ extends $\pi_K^0$. Indeed, let $a\in
\Dom{\pi_K^0}$. Then $a\in\Dom{\pi_i}$ for some $i$. Therefore there is a tuple 
$a_1,\ldots,a,\ldots,a_n\in\Dom{\pi_G}$. 

By construction of $\pi_K^0$, we have 
\[\pi_G(a_1,\ldots,a,\ldots,a_n)=(a_1^{\pi_K^0},\ldots,a^{\pi_K^0},\ldots,
a_n^{\pi_K^0})\] 
Similarly 
\[\pi_{G^\prime}(a_1,\ldots,a,\ldots,a_n)=(a_1^{\pi_{K^\prime}^0},
\ldots,a^{\pi_{K^\prime}^0},\ldots,a_n^{\pi_K^0})\]

But $\pi_{G^\prime}(a_1,\ldots,a,\ldots,a_n) = \pi_G(a_1,\ldots,a,\ldots,a_n)$, 
since $\pi_{G^\prime}$ extends $\pi_G$. Therefore 
\[(a_1^{\pi_K^0},\ldots,a^{\pi_K^0},\ldots,a_n^{\pi_K^0}) = (a_1^{\pi_{K^\prime}^0},
\ldots,a^{\pi_{K^\prime}^0},\ldots,a_n^{\pi_K^0})\]
Hence $\pi_{K^\prime}^0(a) = \pi_K^0(a)$ as desired.

Since $\pi_K^0$ is $\kappa$-universal, there is an embedding $\sigma_K:B
\to K$ such that $\pi_K^0=\pi_{K^\prime}^0\circ \sigma_K$ on $B$ where 
$B=\displaystyle\bigcup_{i=1}^n\pr_i(A)$. Define $\sigma:A\to G(K)$ as 
$\sigma(x_1,\ldots,x_n) = (\sigma_K(x_1),\ldots,\sigma_K(x_n))$.

Next, we will show that $\pi_{G^\prime}=\pi_G\circ\sigma$ on $A$. Let $(a_1,
\dots,a_n)\in A$. Then 
\begin{eqnarray*}
\pi_{G^\prime}(a_1,\dots,a_n) &=& (a_1^{\pi_{K^\prime}^0},
\dots,a_n^{\pi_{K^\prime}^0})= ((\sigma_K(a_1))^{\pi_K^0},\dots,
(\sigma_K(a_n))^{\pi_K^0})\\ 
&=& \pi_G(\sigma_K(a_1),\ldots,\sigma_K(a_n)) = \pi_G\circ\sigma
(a_1,\ldots,a_n)
\end{eqnarray*}
\end{proof}

\begin{Remark}
Let $G$ be an affine variety defined over $k$. Let $res:K\to k$ be $\kappa$-universal 
specialisation. Consider the specialisation $Res:G(K)\to G(k)$ induced by 
$res$ is $\kappa$-universal: $\Dom{Res} = (\Dom{res})^n\cap G(k)$, and 
for any $\bar a\in\Dom{Res}$, we define $Res(\bar a) = (res(a_1),\ldots,res(a_n))$. 
Clearly, the specialisation $Res_K^0:K\to k$ induced by $Res$ is a restriction 
of $res$. If, $Res_K^0 = res$, then $Res$ is $\kappa$-universal.  
\end{Remark}

Now we proceed to the proof of the first main result of this section.

\begin{Proposition}\label{prop:characterization-spcl-affine-varieties}
Let $G$ be an affine variety defined over $k$. Let $\pi_G:G(K)\to G(k)$ be a 
maximal specialization. Then there is a maximal specialization $\pi_K:K\to k$ 
such that $\Dom{\pi_G} = G(K)\cap (\Dom{\pi_K})^n$ and for any $\bar x\in 
\Dom{\pi_G}$, its image $\pi_G(\bar x) = (x_1^{\pi_K},\ldots,x_n^{\pi_K})$.
\end{Proposition}
\begin{proof}[Proof of Proposition~\ref{prop:characterization-spcl-affine-varieties}]
Let $\pi_G:G(K)\to G(k)$ be a maximal specialization. Look at the coordinate 
projections $\pr_i:G(K)\to K\setminus X_i$ as before and define $\pi_i:K
\setminus X_i\to k\setminus X_i$ as follows: Domain of $\pi_i$ is $\pr_i
(\Dom{\pi_G})$ and for any $x\in\pr_i(\Dom{\pi_G})$ there is an element 
$(x_1,\ldots,x_n)\in\pr_i^{-1}(x)\cap\Dom{\pi_G}$. Define $\pi_i(x) = \pr_i
(\pi_G(x_1,\ldots,x_n))$. By Proposition~\ref{prop:component-spcl}, $\pi_i$ is 
a specialization for all $i$.

Let $D := \Dom{\pi_1}\cup\ldots\cup\Dom{\pi_n}$. We define $\pi_K^0:D\to k
\setminus X$ as follows: for any $x\in D$, there is an $i = 1,\ldots, n$ such that 
$x\in\Dom{\pi_i}$. Then define $\pi_K^0(x) = \pi_i(x)$. By Proposition~
\ref{prop:intrsction-domains}, $\pi_K^0$ is well defined and by Proposition~\ref
{prop:common-ext} it is a specialization.

Extend $\pi_K^0$ to $X$ by defining $\pi_K^0(x) = x$ for all $x\in X$. We can 
do this since $X\subset k$. Then we get an extension of $\pi_K^0$ to a 
specialization $\pi_K^0:K\to k$. 

If $\pi_K^0$ is a maximal specialization, take $\pi_K$ to be $\pi_K^0$. Then, 
by Proposition~\ref{prop:spcl-ind-by-extension} and the observation following 
it, we are done. 

So, suppose $\pi_K^0$ is not maximal. First, by extending $\pi_K^0$ to the ring 
generated by $\Dom{\pi_K^0}$ we may assume that $\Dom{\pi_K^0}$ is a ring. 
In this case $\pi_K^0$ is a ring homomorphism. Then by Chevalley's Extension 
Theorem, we may extend $\pi_K^0$ to a residue map, call it $\pi_K$. It is clear 
that $\pi_K$ is a maximal specialization on $K$.

Next, let $\pi_G^+:G(K)\to G(k)$ be the specialization induced on $G(K)$ by 
$\pi_K$. I.e. $\Dom{\pi_G^+} = G(K)\cap(\Dom{\pi_K})^n$, and if $\bar x = 
(x_1,\ldots, x_n)\in\Dom{\pi_G^+}$, then $\pi_G^+(\bar x) := (x_1^{\pi_K}, 
\ldots, x_n^{\pi_K})$.

Then again by Proposition~\ref{prop:spcl-ind-by-extension} and the observation 
following it, $\Dom{\pi_G} = \Dom{\pi_G^+} = G(K)\cap (\Dom{\pi_K})^n$ and 
$\pi_G = \pi_G^+$. Therefore $\pi_G(\bar x) = \pi_G^+(\bar x) = (x_1^{\pi_K},
\ldots,x_n^{\pi_K})$ for any $\bar x\in\Dom{\pi_G}$. 
\end{proof}

%
%

Next, we will consider the case where $G$ is a quasi-projective variety 
defined over $k$. Therefore $G$ is an open subset of some closed projective 
set $X\subseteq\mathbb P^n$. Let $\mathbb A_i^n:=\{(z_0,\ldots,z_n): 
z_i\neq 0\}$. We know that $\mathbb P^n =\displaystyle\bigcup_{i=0}^n
\mathbb A_i^n$. Also, $\mathbb A_i^n$ is isomorphic to $\mathbb 
A^n$ for all $i$ via 
\begin{eqnarray*}
h_i :\mathbb A_i^n &\to & \mathbb A^n\\
(z_0,\ldots,z_n)&\mapsto & \left(\frac{z_0}{z_i},\ldots,\frac{z_n}{z_i}\right)
\end{eqnarray*}
where the tuple $\left(\frac{z_0}{z_i},\ldots,\frac{z_n}{z_i}\right)$ does not 
contain the $i^{\mbox{\tiny th}}$ coordinate. 

It follows that $G=\displaystyle\bigcup_{i=0}^n U_i$ where $
U_i=\mathbb A_i^n\cap G$. Clearly each $\mathbb A_i^n$ is an open subset of 
$\mathbb P^n$. Therefore each $U_i$ is an open subset of $G$. Moreover 
one can see that each $U_i$ is a locally closed subset of $\mathbb A_i^n$. 
Since $\mathbb A_i^n$ is isomorphic to $\mathbb A^n$, each $U_i$, via 
restriction of $h_i$, is isomorphic to a locally closed subset of $\mathbb A_n$, 
call it $V_i$. So each $U_i$ is a quasi-affine variety. The sets $U_i$ are called 
\emph{affine pieces of $G$}, and a particular $U_i$ will be called the 
$i^{\mbox{\tiny th}}$ affine piece.  

\begin{Proposition}\label{prop:characterization-spcl-qp-varieties}
Let $G$ be a quasi-projective variety, and $\pi_G:G(K)\to G(k)$ be a specialization. 
Then $\pi_G$ induces a specialization $\pi_{U_i}:U_i(K)\to U_i(k)$ for each 
$i$ where $U_i$ is the $i^{\mbox{\tiny th}}$ affine piece of $G$. 
\end{Proposition}
\begin{proof}

Define $\pi_{U_i}:U_i(K)\to U_i(k)$ as follows: $\pi_{U_i}$ is not defined on 
$U_i \setminus \Dom{\pi_G}$. Let $x\in U_i(K)\cap\Dom{\pi_G}$. If $\pi_G(x)
\in U_i(k)$, define $\pi_{U_i}(x) := \pi_G(x)$. If $\pi_G(x)\not\in U_i(k)$, then 
$\pi_{U_i}$ will not be defined on $x$. Hence $\Dom{\pi_{U_i}} = \{x\in\Dom{\pi_G}
: \pi_G(x)\in U_i(k)\}$.

Next, we will show that $\pi_{U_i}:U_i(K)\to U_i(k)$ is a specialization. Since 
$\pi_G$ is identity on $G(k)$, it is clear that $\pi_{U_i}$ is identity on $U(k)$. 
Now, let $S$ be a closed subset of $(U_i(K))^m$, and suppose $U_i(K)\models 
S(\bar a)$ for some $\bar a \in \Dom{\pi_{U_i}}$.

Let $\bar S$ be the closure of $S$ in $(G(K))^n$. Then $G(K)\models\bar S
(\bar a)$. Since $\pi_G$ is a specialization, $\pi_G(\bar a)\in \bar S$ (i.e. $G(k)
\models \bar S(\pi_G(\bar a))$). Since $\bar a\in U_i(K)$, we know that $\pi_G
(\bar a) = \pi_{U_i}(\bar a)$. Therefore $\pi_{U_i}(\bar a)\in U_i(k)\cap \bar S(k) 
= S(k)$.
 \end{proof}

\begin{Remark}
Let $\alpha\in U_i(K)\cap\Dom{\pi_G}$ with $a:=\pi_G(\alpha)\not\in U_i(k)$. 
So, $\pi_G(\alpha)\in U_j(k)$ for some $j\neq i$. Then $\alpha\in U_j(K)$. Suppose 
not, i.e. suppose $\alpha\not\in U_j(K)$. Then $\alpha_j = 0$. But this is a Zariski 
closed relation, and since $\pi_G$ is a specialization, it must be preserved by $\pi_G$. 
Therefore, $a_i = 0$. However, $a\in U_j(k)$ means exactly that $a_i\neq 0$. Which 
is a contradiction. Hence $\alpha\in U_j(K)$.

Moreover, as $\alpha\in\Dom{\pi_G}$, it will be in the domain of $\pi_{U_j}:U_j(K)\to U_
j(k)$, which is constructed as in the above proof.  
\end{Remark}

As already remarked above, the affine pieces $U_i$ of $G$ are locally 
closed in $\mathbb A_i^n$, and $\mathbb A_i^n$ is isomorphic to the 
affine space $\mathbb A^n$, via $h_i$. Then $U_i$ is isomorphic to a 
locally closed subset $V_i\subseteq\mathbb A^n$ via $h_i$. 

\begin{Proposition}
Let $U_i$ be an affine piece of $G$. Then the specialization $\pi_{U_i}:
U_i(K)\to U_i(k)$ induces a specialization $\pi_{V_i}:V_i(K)\to V_i(k)$. 
\end{Proposition}
\begin{proof}
Define $\pi_{V_i} := h_i\circ\pi_{U_i}\circ h_i^{-1}$, and let $\Dom{\pi_{V_i}} = 
h_i(\Dom{\pi_{U_i}})$. First, let us verify that $\pi_{V_i}$ is identity 
on $V_i(k)$. Let $a\in V_i(k)$, then $h^{-1}(a)\in U_i(k)$. Since $\pi_{U_i}$ 
is identity on $U_i(k)$, we get $\pi_{U_i}(h^{-1}_i(a)) = h^{-1}_i(a)$. Then 
\[\pi_{V_i}(a) = h_i\circ\pi_{U_i}\circ h_i^{-1}(a) = h_i(\pi_{U_i}(h^{-1}_i(a))) = 
h_i(h^{-1}_i(a)) = a\]

Next, let $S$ be a closed subset of $(V_i(K))^n$, and suppose $V_i(K)\models 
S(\bar a)$ for some $\bar a\in\Dom{\pi_{V_i}}$. Let $T:= h^{-1}_i(S)$. Since 
$h_i^{-1}$ is a homeomorphism, $T$ is closed in $(U_i(K))^n$. Then, since 
$\pi_{U_i}$ is a specialization, $\pi_{U_i}(h_i^{-1}(\bar a))\in T$. Then $\pi_{V_i}(\bar a) 
= h_i(\pi_{U_i}(h_i^{-1}(\bar a)))\in h_i(T) = S$. Hence $\pi_{V_i}$ is a specialization.
\end{proof}

We can repeat the argument we used to construct the specializations 
$\pi_i$ in Proposition~\ref{prop:component-spcl}. Consider the coordinate 
projections $\pr_j:V_i(k)\to k$ for each $j = 1,\ldots, n$. Then $\pr_j(V_i(k)) 
= k\setminus X_{ij}$ for some finite $X_{ij}\subset k$. Define $\pi_{{V_i}_j}:
K\setminus X_{ij}\to k\setminus X_{ij}$ as before: First we define $\Dom{\pi_{{V_i}_j}} 
:= \pr_j(\Dom{\pi_{V_i}})$. Then for any $x\in\Dom{\pi_{{V_i}_j}}$, to define 
$\pi_{{V_i}_j}(x)$ pick an element $(x_1,\ldots, x_n)\in \pr_j^{-1}(x)\cap
\Dom{\pi_{V_i}}$, and define $\pi_{{V_i}_j}(x) := \pr_j(\pi_{V_i}(x_1,\ldots,x_n))$.

As proved in Proposition~\ref{prop:component-spcl}, $\pi_{{V_i}_j}:K\setminus 
X_{ij}\to k\setminus X_{ij}$ is a specialization for each $j=1,\ldots, n$. Also, for 
each $j,k =1,\ldots,n$, the specializations $\pi_{{V_i}_j}$ and $\pi_{{V_i}_k}$ are 
equal on the intersection $\Dom{\pi_{{V_i}_j}}\cap\Dom{\pi_{{V_i}_k}}$ of their 
domains (Proposition~\ref{prop:intrsction-domains}). Moreover there is a common 
extension $\pi_{iK}^0:D_i\to k\setminus X_i$ of the specializations $\pi_{{V_i}_1},
\ldots,\pi_{{V_i}_n}$ where $D_i = \displaystyle\bigcup_{j=1}^n \Dom{\pi_{{V_i}_j}}$ 
and $X_i = \displaystyle\bigcap_{j=1}^n X_{ij}$ (Proposition~\ref{prop:common-ext}). 
Since $X_i\subseteq k$, and since any specialization $K\to k$ must be identity 
on $k$, we can extend $\pi_{iK}^0$ to a specialization $\pi_{iK}^0:K\to k$ with 
$\Dom{\pi_{iK}^0} = D_i\cup X_i$. Therefore, for each $i$ we have a specialization 
$\pi_{iK}^0:K\to k$.

The proposition below immediately follows from the Proposition~\ref{prop:spcl-ind-by-extension}. 

\begin{Proposition}\label{prop:spcl-ext-var}
The specialization $\pi_{V_i}^0:V_i(K)\to V_i(k)$ induced by $\pi_{iK}^0:K\to k$ 
is an extension of the specialization $\pi_{V_i}:V_i(K)\to V_i(k)$ for all $i$, and 
$\pi_{V_i}^0 = \pi_{V_i}$ on $\Dom{\pi_{V_i}}$. 
\end{Proposition} 

\begin{Remark}\label{rmk:special-var-final}
By Proposition~\ref{prop:spcl-ext-var}, we see $\pi_{V_i}(x_1,\ldots, x_n) 
= (x_1^{\pi_{iK}^0}, \ldots, x_n^{\pi_{iK}^0})$ for any $(x_1,\ldots, x_m)\in
\Dom{\pi_{V_i}}$, for each $i$. By definition $\pi_{V_i}(x_1,\ldots, x_n) = h_i\circ\pi_{U_i}\circ 
h_i^{-1}(x_1,\ldots, x_n)$. Write $h_i^{-1}(x_1,\ldots, x_n)$ in homogeneous 
coordinates as $(z_0:\ldots:1:\ldots:z_n)$ where $z_0=x_1,\ldots, z_{i-1}=x_i,
z_{i+1}=x_{i+1},\ldots z_n=x_n$ and $z_i=1$. 

Therefore 
\[(x_1^{\pi_{iK}^0}, \ldots, x_n^{\pi_{iK}^0}) = h_i\circ\pi_{U_i}\circ h_i^{-1}
(x_1,\ldots, x_n) = h_i\circ\pi_{U_i}(z_0:\ldots:1:\ldots:z_n).\]  
Then 
\[\pi_{U_i}(z_0:\ldots:1:\ldots:z_n) = h_i^{-1}(x_1^{\pi_{iK}^0}, \ldots, 
x_n^{\pi_{iK}^0}).\] 
Write $h_i^{-1}(x_1^{\pi_{iK}^0}, \ldots, x_n^{\pi_{iK}^0})$ 
in homogeneous coordinates as $(x_1^{\pi_{iK}^0}: \ldots:1: \dots: x_n^{\pi_{iK}^0})$. 
Moreover, $(x_1^{\pi_{iK}^0}: \ldots:1: \dots: x_n^{\pi_{iK}^0}) = (z_0^{\pi_{iK}^0}:
\ldots:1:\ldots:z_n^{\pi_{iK}^0})$. Therefore 
\[\pi_{U_i}(z_0:\ldots:1:\ldots:z_n)= (z_0^{\pi_{iK}^0}:\ldots:1:\ldots:z_n^{\pi_{iK}^0})\]

If $\pi_{iK}^0:K\to k$ is not maximal, we can extend it to a maximal specialisation 
$\pi_{iK}:K\to k$ as before. Therefore, we can say that on each affine piece $U_i$ 
of $G$, the specialisation $\pi_{U_i}$ induced by the specialisation $\pi_G:G(K)\to 
G(k)$, can be given in terms of a maximal specialisation $\pi_{iK}:K\to k$.

By construction, $\pi_G=\displaystyle\bigcup_{i=0}^n\pi_{U_i}$. Hence, the specialisation 
$\pi_G$ can be given in terms of maximal specialisations $\pi_{iK}$ for $i=0,\ldots,n$ 
on the corresponding affine pieces. 
\end{Remark}



\begin{thebibliography}{99}
\bibitem{Delon2012} Delon F. \' Elimination des Quantificateurs dans les Paires de Corps Alg\' ebraiquement Clos. 
Confluentes Mathematici, 2012; Volume 4, No. 2. 
\bibitem{de Piro} de Piro T. A Non Standard Approach to the Theory of Algebraic Curves.  
unpublished note, available at 
\newline \url{http://www.magneticstrix.net/web17.pdf})
\bibitem{HHM} Haskell D,  Hrushovski E,  Macpherson D. Definable Sets in Algebraically Closed Valued Fields: 
Elimination of Imaginaries. J. Reine Angew. Math. 2006; 597: 175-236.  
\bibitem{HHM2} D. Haskell, E. Hrushovski, D. Macpherson \textquotedblleft Stable Domination and Independence in Algebraically Closed Valued 
Fields \textquotedblright, Lectures in Logic, vol. 30, Association of Symbolic Logic, IL, Chicago, 2008 
\bibitem{Henson} Henson CW.  Spring 2010 Class Notes for Mathematics 571: Model Theory. 
Lecture Note, 2010.
available
at \url{http://people.math.sc.edu/mcnulty/modeltheory/Henson.pdf}
\bibitem{HK} E. Hrushosvki, D. Kazhdan, \textquotedblleft Motivic Poisson Summation\textquotedblright, Mosc. Math. J. Vol.9, 
No.3, p. 569-623, back matter, 2009.
\bibitem{HZ} E. Hrushosvki, B. Zilber, \textquotedblleft Zariski Geometries \textquotedblright, J. of the American Math. Society, Vol.9, 
No.1, January 1996.
\bibitem{Leloup} G. Leloup , \textquotedblleft Theories compl\` etes des paires des corps valu\' es henseliens \textquotedblright, J. Symbolic Logic, Vol. 55, 
p.323-339, March 1990.
\bibitem{Macpherson} Macpherson D.  Model Theory of Valued Fields. Lecture Notes, 2008.
\newline \url{http://www1.maths.leeds.ac.uk/pure/staff/macpherson/modnetval4.pdf}
\bibitem{OZ} A. Onshuus, B. Zilber, \textquotedblleft The First Order Theory of Universal Specializations \textquotedblright, preprint, 
(No.355 in Modnet preprint server: 
\newline \url{http://www.logique.jussieu.fr/modnet/Publications/Preprint%20server/})
\bibitem{vdD} van den Dries L. Some Applications of a Model Theoretic Fact to (Semi-) Algebraic Geometry. 
Indagationes Mathematicae (Proceedings) 1982; Volume 85, Issue 4: 397-401.
\bibitem{vdD2} van den Dries L. Lectures on the Model Theory of Valued Fields. In: Machperson D, Toffalori C, editors. Model 
Theory in Algebra, Analysis and Arithmetic 2012. Berlin Heildeberg, Germany: Springer-Verlag, 2014, pp. 35-55.
\bibitem{Weglorz} Weglorz B. Equationally Compact Algebras (I). Fund Math 1966; 59: 289 -298.
\bibitem{Pillay} Pillay A. Geometric Stability Theory. Oxford, UK: Oxford University Press, 2002.
\bibitem{Robinson} Robinson A. Complete Theories. Amsterdam, the Netherlands: North-Holland, 1956.
\bibitem{Sustretov} D. Sustretov, \textquotedblleft Non-algebraic Zarsiki geometries \textquotedblright, PhD Thesis, University of 
Oxford, 2012.
\bibitem{Weil} Weil A. Foundations of Algebraic Geometry. New York, NY, USA: American Math Society, 1946.
\bibitem{Zilber} Zilber B. Zariski Geometries. Cambridge, UK: Cambridge University Press, 2010. 


\end{thebibliography}
\end{document}